\numberwithin{equation}{section}
\def\PP{\mathbb{P}}
\def\QQ{\mathbb{Q}}
\def\RR{\mathbb{R}}
\def\EE{\mathbb{E}}
\def\11{\mathbbm{1}}
\def\E{\mathbb{E}}
\def\P{\mathbb{P}}
\def\Q{\mathbb{Q}}
\def\N{\mathbb{N}}
\def\d{\partial}
\def\tT{\widetilde{T}}
\def\tX{\widetilde{X}}
\def\cE{{\cal E}}
\newtheorem{thm}{Theorem}[section]
\newtheorem{prop}[thm]{Proposition}
\theoremstyle{remark}
\begin{document}

\title{Exponential convergence to quasi-stationary distribution for absorbed one-dimensional diffusions with killing}


\author{Nicolas Champagnat$^{1,2,3}$, Denis Villemonais$^{1,2,3}$}

\footnotetext[1]{IECL, Universit\'e de Lorraine, Site de Nancy, B.P. 70239, F-54506 Vandœuvre-lès-Nancy Cedex, France}
\footnotetext[2]{CNRS, IECL, UMR 7502, Vand{\oe}uvre-l\`es-Nancy, F-54506, France}  
\footnotetext[3]{Inria, TOSCA team, Villers-l\`es-Nancy, F-54600, France.\\
  E-mail: Nicolas.Champagnat@inria.fr, Denis.Villemonais@univ-lorraine.fr}

\maketitle

\begin{abstract}
  This article studies the quasi-stationary behavior of absorbed one-dimensional diffusion processes with killing on $[0,\infty)$.
  We obtain criteria for the exponential convergence to a unique quasi-stationary distribution in total variation, uniformly with
  respect to the initial distribution. Our approach is based on probabilistic and coupling methods, contrary to the classical
  approach based on spectral theory results. Our general criteria apply in the case where $\infty$ is entrance and 0 either regular
  or exit, and are proved to be satisfied under several explicit assumptions expressed only in terms of the speed and killing
  measures. We also obtain exponential ergodicity results on the $Q$-process. We provide several examples and extensions, including
  diffusions with singular speed and killing measures, general models of population dynamics, drifted Brownian motions and some
  one-dimensional processes with jumps.
\end{abstract}

\noindent\textit{Keywords:} {diffusions; one-dimensional diffusions with killing; absorbed process; quasi-stationary distribution;
  $Q$-process; uniform exponential mixing property; one dimensional processes with jumps.}

\medskip\noindent\textit{2010 Mathematics Subject Classification.} Primary: {60J60; 60J70; 37A25; 60B10; 60F99}. Secondary: {60G44;
  60J75}.

\section{Introduction}
\label{sec:intro}

This article studies the quasi-stationary behavior of general one-di\-men\-sio\-nal diffusion processes with killing in an interval
$E$ of $\RR$, absorbed at its finite boundaries. When the process is killed or absorbed, it is sent to some cemetary point $\d$. This
covers the case of solutions to one-dimensional stochastic differential equations (SDE) with space-dependent killing rate, but also
of diffusions with singular speed and killing measures.

We recall that a \emph{quasi-stationary distribution} for a continuous-time Markov process $(X_t,t\geq 0)$ on the state space
$E\cup\{\d\}$, is a probability measure $\alpha$ on $E$ such that
$$
\PP_\alpha(X_t\in\cdot\mid t<\tau_\partial)=\alpha(\cdot),\quad \forall t\geq 0,
$$
where $\PP_\alpha$ denotes the distribution of the process $X$ given that $X_0$ has distribution $\alpha$, and
$$
\tau_\d:=\inf\{t\geq 0: X_t=\d\}.
$$
We refer to~\cite{meleard-villemonais-12,vanDoorn2013,pollett-11} for general introductions to the topic.

Our goal is to give conditions ensuring the existence of a unique \emph{quasi-limiting distribution} $\alpha$ on $E$, i.e.\ a
probability measure $\alpha$ such that for all probability measures $\mu$ on $E$ such that $\mu(\mathring{E})>0$ and all $A\subset E$ measurable,
\begin{align}
\label{eq:QLD}
\lim_{t\rightarrow+\infty}\PP_\mu(X_t\in A\mid t<\tau_\partial)=\alpha(A),
\end{align}
where, in addition, the convergence is exponential and uniform with respect to $\mu$ and $A$. In particular, $\alpha$ is the unique
\emph{quasi-stationary distribution}.

This topic has been extensively studied for one-dimensional diffusions without killing
in~\cite{CCLMMS09,littin-12,miura-14,champagnat-villemonais-15b} \cite{collet-martinez-al-13b}, where nearly optimal criteria are obtained. The case with killing
is more complex and the existing results cover less general situations~\cite{Steinsaltz2007,kolb-steinsaltz-12}. In particular, these
references are restricted to the study of solutions to SDEs with continuous absorption rate up to the boundary of $E$, and let the questions of
uniqueness of the quasi-stationary distribution and of convergence in~\eqref{eq:QLD} open.

The present paper is focused on the case of a diffusion on $E=[0,+\infty)$ absorbed at 0 with scale function $s$, speed measure $m$ and killing
measure $k$, assuming that killing corresponds to an immediate jump to $\d=0$. We consider the situation where $\infty$ is an
entrance boundary and 0 is either exit or regular. Our results easily extend to cases of bounded intervals with reachable boundaries.

We give two criteria, each of them involving one condition concerning the diffusion (without killing) with scale function $s$ and speed measure
$m$, and another condition on the killing time. The condition on the diffusion without killing comes
from~\cite{champagnat-villemonais-15b} and covers nearly all one-dimensional diffusions on $[0,\infty)$ such that $\infty$ is an
entrance boundary and a.s.\ absorbed in finite time at $0$. The conditions on the killing time only concern the behavior of the
diffusion and of the killing measure in the neighborhood of 0, as soon as $\infty$ is an entrance boundary. In order to apply these
results to practical situations, we provide several explicit criteria ensuring all these conditions and a series of examples that
enter our setting.

This contribution improves known results in several directions. First, it covers situations of diffusion processes which
are not solutions to SDEs and cases of irregular and unbounded killing rates. Secondly, aside from proving the existence of a
quasi-stationary distribution, we also obtain exponential convergence of conditional distributions. In particular, all the initial
distributions of $X$ belong to the domain of attraction of the quasi-stationary distribution. This is of great importance in
applications, since in practice, one usually has no precise estimate of the initial distribution. In addition, our estimates of
convergence are exponential and uniform in total variation norm, and hence provide a uniform bound for the time needed to observe
stabilisation of the conditional distribution of the process, regardless of the initial distribution (see also the discussion
in~\cite[Ex.\,2]{meleard-villemonais-12}).

The main methodological novelty of our proofs relies on its purely probabilistic approach. We do not use any spectral theoretical
result (typically, Sturm-Liouville theory), which are the key tool of all the previously cited works on diffusions,
except~\cite{champagnat-villemonais-15b}. Instead, we use criteria for general Markov processes proved
in~\cite{champagnat-villemonais-15}, based on coupling and Dobrushin coefficient techniques. These criteria also imply exponential
ergodicity for the $Q$-process, defined as the process $X$ conditioned to never be absorbed. The generality and flexibility of this
approach also allow to cover, without substantial modification of the arguments, many situations where the spectral theory has
received much less attention, such as one-dimensional diffusions with jumps and killing.

For our study, we need to give a probabilistic formulation of the property that $\infty$ is entrance. In the case without killing,
this is known as the classical property of \emph{coming down from infinity} (see e.g.~\cite{CCLMMS09,champagnat-villemonais-15b}).
For a diffusion with killing, we show that a process with entrance boundary at $\infty$ \emph{comes down from infinity before
  killing} in the sense that the diffusion started from $+\infty$ hits 0 in finite time before killing with positive probability.

The paper is organized as follows. In Section~\ref{sec:construction}, we precisely define the absorbed diffusion processes with
killing under study and give their construction as time-changed Brownian motions. Although quite natural, this alternative
construction of killed diffusions is not given in classical references. Section~\ref{sec:main-result} contains the statement of our
main results on the exponential convergence of conditional distributions, the asymptotic behavior of the probability of survival and
the existence and ergodicity of the $Q$-process. In Subsection~\ref{sec:C-C'}, we give explicit criteria ensuring the conditions of
Section~\ref{sec:main-result}. We focus in Sections~\ref{sec:construction} and~\ref{sec:QSD-diff-kill} on diffusions on natural
scale. The extension to general diffusions is given in Section~\ref{sec:ex1}, together with a series of examples of diffusions which
are not solutions to SDEs, of diffusions with unbounded or irregular killing rates and of one-dimensional processes with killing and
jumps. Section~\ref{sec:popu-dyn} is devoted to the study of models of population dynamics of the form
\begin{align*}
  dY_t=\sqrt{Y_t}d B_t+Y_t h(Y_t)dt
\end{align*}
and Section~\ref{sec:sturm-liouville} to drifted Brownian motions, which are the basic models
of~\cite{Steinsaltz2007,kolb-steinsaltz-12}. Section~\ref{sec:pties-diff} concerns the property of coming down from
infinity before killing. Finally, Section~\ref{sec:proof-results-kill} gives the proofs of our main results.

\section{Absorbed diffusion processes with killing}
\label{sec:construction}

Our goal is to construct diffusion processes with killing on $[0,+\infty)$, absorbed at $\partial=0$. The typical situation
corresponds to stochastic population dynamics of continuous densities with possible continuous or sudden extinction.

We recall that a stochastic process $(X_t,t\geq 0)$ on $[0,+\infty)$ is called a diffusion (without killing) if it has a.s.\
continuous paths, satisfies the strong Markov property and is \emph{regular}. By regular, we mean that for all $x\in(0,\infty)$ and
$y\in[0,\infty)$, $\P_x(T_y<\infty)>0$, where $T_y$ is the first hitting time of $y$ by the process $X$. Given such a process, there
exists a continuous and strictly increasing function $s$ on $[0,\infty)$, called the \emph{scale function}, such that $(s(X_{t\wedge
  T_0}),t\geq 0)$ is a local martingale~\cite{freedman-83}. The stochastic process $(s(X_t),t\geq 0)$ is itself a diffusion process
with identity scale function. Replacing $(X_t,t\geq 0)$ by $(s(X_t),t\geq 0)$, we can assume without loss of generality that
$s(x)=x$.

To such a process $X$ on natural scale, one can associate a unique locally finite positive measure $m(dx)$ on $(0,\infty)$, called
the \emph{speed measure} of $X$, which gives positive mass to any open subset of $(0,+\infty)$ and such that $X_t=B_{\sigma_t}$ for
all $t\geq 0$ for some standard Brownian motion $B$, where
\begin{align}
  \label{eq:As}
  \sigma_t=\inf\left\{s>0:A_s>t\right\}, \quad\text{with }A_s=\int_0^\infty L^x_s\, m(dx)
\end{align}
and $L^x$ is the local time of $B$ at level $x$. Conversely, given any positive locally finite measure $m$ on $(0,\infty)$ giving
positive mass to any open subset of $(0,\infty)$, any such time change of a Brownian motion defines a regular diffusion on
$[0,\infty)$~\cite[Thm.\,23.9]{kallenberg-02}. Note that, since $\sigma_t$ is continuous and since $X_t=B_{\sigma_t}$ for all $t\geq 0$ a.s., we have
\begin{equation}
  \label{eq:lien-tau-d-brownien}
  \sigma_{T_0}=T^B_0,  \quad\text{or, equivalently,}\quad T_0=A_{T^B_0},
\end{equation}
where $T^B_x$ is the first hitting time of $x\in\RR$ by the process $B$.

In this work, we study diffusion processes on $[0,+\infty)$ with killing as defined in~\cite{ito-mckean-74}. As above, we can assume
without loss of generality that the diffusion is on natural scale. We show below that such a process can be obtained from an explicit
pathwise construction from a given Brownian motion $(B_t,t\geq 0)$ and an independent exponential random variable $\mathcal{E}$ of
parameter 1. Although this construction is quite natural, this is not done in the classical
references~\cite{ito-mckean-74,freedman-83,meleard-86,kallenberg-02}.

Let $k$ and $m$ be two positive locally finite measures on $(0,+\infty)$, such that $m$ gives positive mass to any open subset of
$(0,+\infty)$. The measures $m$ and $k$ will be referred to as the speed measure and the killing measure, respectively, of the
diffusion with killing.

We first define the diffusion process on natural scale without killing as above by
$$
\tX_t:=B_{\sigma_t},\qquad \forall t\geq 0,
$$
where $\sigma_t$ is defined by~\eqref{eq:As}. Next, we define for all $t\geq 0$
\begin{equation}
  \label{eq:kappa_t}
  \kappa_t:=\int_0^\infty L^y_{\sigma_t}\,k(dy),\quad\forall t\geq 0.  
\end{equation}
It follows from the definition of the local time $(L^{\tX,x}_t,t\geq 0, x\geq 0)$ of the process $\tX$ of~\cite[p.\,160]{freedman-83}
that
$$
\kappa_t=\int_0^\infty L^{\tX,y}_t\, k(dy).
$$
Let $\mathcal{E}$ be an exponential r.v.\ of parameter 1 independent of $B$. Then we define
\begin{equation}
  \label{eq:def-tau-kappa}
  \tau_\kappa:=\inf\{t\geq 0:\kappa_t\geq \mathcal{E}\}\quad \text{and}\quad \tT_0:=\inf\{t\geq 0:\tX_t=0\}.
\end{equation}
Our goal is to construct a process $X$ that can hit 0 either continuously following the path of $\tX$, or discontinuously when it is
killed at time $\tau_\kappa$. This leads to the following definition of the \emph{time of discontinuous absorption} (or \emph{killing time}) $\tau^d_\d$ and the
 \emph{time of continuous absorption} $\tau^c_\d$:
\begin{equation}
  \label{eq:def-absorption-times}
  \tau^d_\d:=
  \begin{cases}
    \tau_\kappa & \text{if }\tau_\kappa<\tT_0, \\
    +\infty & \text{if }\tT_0\leq\tau_\kappa
  \end{cases}
  \qquad \text{and} \qquad
  \tau^c_\d:=
  \begin{cases}
    +\infty & \text{if }\tau_\kappa<\tT_0, \\
    \tT_0 & \text{if }\tT_0\leq\tau_\kappa.
  \end{cases}
\end{equation}
The absorption time $\tau_\d$ is then defined as
$$
\tau_\d:=\tau^d_\d\wedge\tau^c_\d.
$$
Then, the absorbed diffusion process with killing $X$ is defined as
\begin{equation}
  \label{eq:def-diff-with-killing}
  X_t:=
  \begin{cases}
    \tX_t & \text{if\ }t<\tau_\d, \\
    \d=0 & \text{otherwise.}
  \end{cases}
\end{equation}

\begin{prop}
  \label{prop:diff-kill-ito-mckean}
  The process $(X_t,t\geq 0)$ defined in~\eqref{eq:def-diff-with-killing} is a diffusion process with killing on natural scale, with
  speed measure $m$ and killing measure $k$, as defined in~\textup{\cite{ito-mckean-74}}.
\end{prop}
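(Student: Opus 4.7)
My plan is to verify that the process $X$ constructed in~\eqref{eq:def-diff-with-killing} matches the Itô-McKean description by checking three items: (i) its ``free part'' $\tX$ is the right regular diffusion, (ii) $X$ is strong Markov, and (iii) its transition semigroup satisfies the characterizing Feynman-Kac identity.

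For (i), \cite[Thm.\,23.9]{kallenberg-02} (cited in the excerpt) gives that $\tX = B_{\sigma_\cdot}$ is a regular diffusion on $[0, \infty)$ with identity scale function and speed measure $m$, absorbed at $\tT_0$. By the change-of-time formula for local times one has $L^{\tX, y}_t = L^y_{\sigma_t}$ (as already observed right after~\eqref{eq:kappa_t}); hence $\kappa_t$ is a continuous additive functional of $\tX$ with Revuz measure $k$, which is the object Itô-McKean attach to the killing measure.

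For (ii), I would take $(\mathcal{F}_t)$ to be the natural filtration of $(B_t,t\ge 0)$ enlarged by $\sigma(\mathcal{E})$, so that $\tau_\kappa$ is an $(\mathcal{F}_t)$-stopping time and $\kappa_S$ is $\mathcal{F}_S$-measurable for every stopping time $S$. By memorylessness of $\mathcal{E}$, on $\{S<\tau_\d\}=\{S<\tT_0\}\cap\{\kappa_S<\mathcal{E}\}$ the residual threshold $\mathcal{E}-\kappa_S$ is, conditionally on $\mathcal{F}_S$, a standard exponential independent of $\mathcal{F}_S$. Combined with the strong Markov property of $\tX$ at $S$ (inherited from $B$), this gives the strong Markov property of $X$ on $[0,\infty)\cup\{\d\}$.

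For (iii), the Itô-McKean diffusion with scale $s(x)=x$, speed $m$ and killing $k$ is characterized, on $(0,\infty)$ before absorption at $0$, by the Feynman-Kac identity
\begin{equation*}
  \EE_x\!\left[ f(X_t)\,\mathbf{1}_{t<\tau_\d} \right] = \EE_x\!\left[ f(\tX_t)\, e^{-\kappa_t}\, \mathbf{1}_{t<\tT_0} \right]
\end{equation*}
for all bounded measurable $f\ge 0$, $x\in(0,\infty)$ and $t\ge 0$. For our constructed $X$, one checks $\tau_\d=\tau_\kappa\wedge\tT_0$ from~\eqref{eq:def-absorption-times}, so $\{t<\tau_\d\}=\{t<\tT_0\}\cap\{\kappa_t<\mathcal{E}\}$; the identity then follows by conditioning on $B$ and using $\PP(\mathcal{E}>\kappa_t\mid B)=e^{-\kappa_t}$. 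The main obstacle is choosing a convenient formulation of Itô-McKean's definition: working directly from the infinitesimal generator or from a martingale problem would force one to verify boundary conditions at $0$ and to control $\kappa$ near the absorbing point, whereas the Feynman-Kac form reduces everything to an elementary computation on the path of $B$ combined with the occupation-time identity of step (i).
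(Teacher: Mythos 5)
Your proposal is correct in substance, and its mathematical heart coincides with the paper's proof, which is much shorter than yours: the paper simply observes that $\kappa_t=\int_0^\infty L^{X,y}_t\,k(dy)$ for $t<\tau_\kappa$ and that, by independence of $\mathcal{E}$ and $B$,
$\PP(\tau_\kappa>t\mid \tX_s,\,s\geq 0)=\exp\bigl(-\int_0^\infty L^{X,y}_t\,k(dy)\bigr)$,
and then notes that this conditional law of the killing time given the whole trajectory of $\tX$ is \emph{verbatim} the definition of the diffusion with killing in \cite[p.\,179]{ito-mckean-74}. Your step (iii) reproduces exactly this conditioning computation, and your step (i) is already established in the paper's Section~2 (via \cite[Thm.\,23.9]{kallenberg-02} and the identity $L^{\tX,y}_t=L^y_{\sigma_t}$) before the proposition is stated. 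Where you genuinely diverge is in the packaging: you read It\^o--McKean's definition as a Feynman--Kac semigroup identity that characterizes the killed process only in conjunction with the strong Markov property, which forces you to add step (ii). The paper's reading makes (ii) unnecessary: since the definition being matched is the conditional-law statement itself (a statement about the joint law of $(\tX,\tau_\kappa)$, stronger than the one-time-marginal semigroup identity), verifying it is the whole proof, and the strong Markov property of the killed process is then part of It\^o--McKean's theory rather than something to check.

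One point in your step (ii) is wrong as written. If you enlarge the Brownian filtration by the whole $\sigma$-field $\sigma(\mathcal{E})$ at time $0$, then $\mathcal{E}$ is $\mathcal{F}_0$-measurable, so on $\{S<\tau_\d\}$ the residual threshold $\mathcal{E}-\kappa_S$ is $\mathcal{F}_S$-measurable --- it cannot be conditionally standard exponential and independent of $\mathcal{F}_S$ in that filtration. The memorylessness argument requires the smaller, progressively enlarged filtration generated by $B$ and the killing indicator process $(\mathbbm{1}_{\tau_\kappa\leq t})_{t\geq 0}$, i.e.\ the natural filtration of the killed process $X$ itself, in which $\tau_\kappa$ is still a stopping time but $\mathcal{E}$ is only revealed at the killing time. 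This is a standard and purely local repair, and since step (ii) is superfluous once (iii) is understood as verifying It\^o--McKean's definition directly, the slip does not invalidate your argument; it only makes your route longer and more delicate than the paper's one-display proof.
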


\begin{proof}
  Note that
  $$
  \kappa_t=\int_0^\infty L^{X,y}_t\, k(dy),\quad\forall t<\tau_\kappa,
  $$
  since $X$ and $\tX$ coincide before $\tau_\kappa$. Since $E$ is independent of $B$,
  \begin{equation}
    \label{eq:calcul}
    \PP(\tau_\kappa>t\mid \tX_t,\,t\geq 0)=\PP(\tau_\kappa>t\mid B_s,\,s\geq 0)=\exp\left(-\int_0^\infty L^{X,y}_t\, k(dy)\right).    
  \end{equation}
  This is exactly the definition of the distribution of the killing time of the diffusion process with killing
  of~\cite[p.\,179]{ito-mckean-74}. 
\end{proof}

In particular, in the case where $k$ is absolutely continuous with respect to $m$, the killing rate of the diffusion at position $x$
(in the sense e.g.\ of~\cite{kolb-steinsaltz-12}) is given by $\frac{dk}{dm}(x)$. This can be deduced from~\eqref{eq:calcul} and from
the occupation time formula of~\cite{freedman-83}.

In addition, the property of regularity of the diffusion with killing can be easily checked from this definition, as shown in the
next proposition.

\begin{prop}
  \label{prop:regularity}
  For all $x,y>0$,
  $$
  \P_x(T_y<\infty)>0,
  $$
  and for all $x>0$ and $t>0$,
  \begin{equation}
    \label{eq:tau_d>t}
    \PP_x(\tau_\d>t)>0,\quad \forall x>0,\ \forall t\geq 0.
  \end{equation}
\end{prop}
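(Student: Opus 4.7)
The plan is to derive both claims by restricting to events on which the unkilled diffusion $\tX$ stays in a compact subinterval $[a,b]\subset(0,\infty)$. On such events the killing clock $\kappa$ is finite, because $k$ is locally finite on $(0,\infty)$ and the local times $L^{\tX,z}$ are bounded uniformly in $z$ along a bounded portion of the path. Combined with independence of $\mathcal{E}$ and the identity~\eqref{eq:calcul}, this gives a positive conditional probability $e^{-\kappa}>0$ of no killing on such events, and both claims then reduce to showing that the relevant event on $\tX$ has positive probability.

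For the first claim, I fix $x,y>0$ and pick $0<a<\min(x,y)$ and $b>\max(x,y)$. The unkilled diffusion $\tX$ hits $y$ before exiting $[a,b]$ with positive probability: if $y<x$, path continuity forces $\tT_y<\tT_a$ on the positive-probability event $\{\tT_a<\tT_b\}$; if $y>x$, the natural scale gives the gambler-ruin estimate $\P_x(\tT_y<\tT_a)=(x-a)/(y-a)>0$. On this event the path is contained in $[a,b]$ up to $\tT_y$, so $\kappa_{\tT_y}\leq k([a,b])\sup_{z\in[a,b]}L^{\tX,z}_{\tT_y}<\infty$ by continuity of local times in $z$ and local finiteness of $k$. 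Conditioning on $\tX$ and using independence of $\mathcal{E}$, $\P(\tau_\kappa>\tT_y\mid\tX)=e^{-\kappa_{\tT_y}}>0$, whence $\P_x(T_y<\infty)\geq\P_x(\tT_y<\tau_\d)>0$.

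For the second claim, I fix $x,t>0$ and choose $0<a<x<b$. The key intermediate fact is that $\P_x(E)>0$, where $E:=\{\tX_s\in[a,b]\ \forall s\in[0,t]\}$. Via the time change $\tX=B\circ\sigma$ this is equivalent to $\P(A_{\tau^B_{a,b}}\geq t)>0$, where $\tau^B_{a,b}:=\inf\{s:B_s\notin[a,b]\}$ and $A_{\tau^B_{a,b}}=\int_a^b L^z_{\tau^B_{a,b}}\,m(dz)$. The exit local time $L^x_{\tau^B_{a,b}}$ has a distribution with unbounded support, and since $m$ charges every neighborhood of $x$, a local-continuity argument for $z\mapsto L^z_{\tau^B_{a,b}}$ transfers this to $A_{\tau^B_{a,b}}$. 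Granting $\P_x(E)>0$, on $E$ we have both $\tT_0>t$ (trivially) and $\kappa_t\leq k([a,b])\sup_{z\in[a,b]}L^{\tX,z}_t<\infty$, and the independence argument above yields $\P_x(\tau_\d>t)\geq\E_x[\11_E e^{-\kappa_t}]>0$.

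The main obstacle is the intermediate fact $\P_x(E)>0$: making the unboundedness of $A_{\tau^B_{a,b}}$ fully rigorous requires controlling the joint behavior of the Brownian exit local times, which can be done either by invoking Ray--Knight theory for the law of $z\mapsto L^z_{\tau^B_{a,b}}$, or by bypassing the time-change entirely and applying the standard support theorem for regular one-dimensional diffusions to conclude directly that the exit time of $\tX$ from $[a,b]$ has support up to $+\infty$. Every other ingredient (natural-scale gambler's ruin, independence of $\mathcal{E}$, continuity of local times, and local finiteness of $k$) is routine.
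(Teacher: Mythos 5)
Your treatment of the first claim is correct and is essentially the paper's argument, with one cosmetic difference: the paper does not introduce a deterministic interval $[a,b]$ at all. It works directly on the event $\{\tT_y<\infty\}$, observing that there $\sigma_{\tT_y}=T^B_y<T^B_0$, so the Brownian path on $[0,T^B_y]$ has a positive infimum and a finite supremum; hence $z\mapsto L^z_{T^B_y}$ is continuous with \emph{random} compact support in $(0,\infty)$, $\kappa_{\tT_y}<\infty$ a.s.\ on the event, and $\P_x(T_y<\infty)=\E_x\bigl[\11_{\tT_y<\infty}e^{-\kappa_{\tT_y}}\bigr]>0$ by~\eqref{eq:calcul} and regularity of the unkilled diffusion. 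Your gambler's-ruin confinement to $[a,b]$ achieves the same thing and is perfectly valid, just slightly heavier.

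For the second claim, however, there is a genuine gap, and it is one the paper's proof shows to be avoidable. You reduce everything to $\P_x(E)>0$ with $E=\{\tX_s\in[a,b],\,\forall s\in[0,t]\}$, and you leave this unproven, offering two routes: a Ray--Knight analysis of $z\mapsto L^z_{\tau^B_{a,b}}$, which is workable but far from a one-line citation (the unbounded support of $L^{z_0}_{\tau^B_{a,b}}$ at one point does not transfer to $\inf_{z\in I}L^z_{\tau^B_{a,b}}$ over a fixed interval without a real argument about the spatial Markov structure of the local time field); and a ``standard support theorem for regular one-dimensional diffusions,'' which does not exist at the generality needed here --- Stroock--Varadhan-type support theorems require SDE structure with continuous coefficients, whereas $m$ may be singular, have atoms, or produce sticky points. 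The fact itself is true (an elementary crossing argument works: force $n$ successive crossings between two interior levels, each crossing staying in $[a,b]$ with positive probability by the natural-scale ruin estimates and each lasting at least some $\delta>0$ with positive probability), but as written your proof is incomplete at its pivotal step. The paper sidesteps all of this: on $\{t<\tau^c_\d\}=\{\sigma_t<T^B_0\}$ the Brownian path on $[0,\sigma_t]$ is again automatically contained in a random compact subinterval of $(0,\infty)$, so $z\mapsto L^z_{\sigma_t}$ is continuous with compact support and $\kappa_t<\infty$ a.s.\ on that event; therefore $\P_x(\tau_\d>t)=\E_x\bigl[\11_{t<\tT_0}e^{-\kappa_t}\bigr]>0$ requires only the classical fact $\P_x(\tX_t>0)>0$ for the unkilled regular diffusion --- a strictly weaker input than your $\P_x(E)>0$, needing no deterministic confinement, no Ray--Knight theory, and no support theorem.
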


\begin{proof}
  Fix $x\not= y$. On the event $\{\tT_y<\infty\}$, we have $\sigma_{\tT_y}=T^B_y<T^B_0$. In particular, the infimum of $B_s$ over
  $[0,T^B_y]$ is positive, and its maximum is finite. Hence the function $y\mapsto L^y_{T^B_y}$ is a.s.\ continuous with compact
  support in $(0,+\infty)$, hence a.s.\ bounded, on the event $\{\tT_y<\infty\}$. Since $k$ is locally finite on $(0,+\infty)$ and
  $\P_x(\tT_y<\infty)>0$, this implies that
  \begin{align*}
    \P_x(T_y<\infty) & =\P_x(\tT_y<\infty\text{ and }\kappa_{\tT_y}<{\cal E}) \\ & =
    \E_x\left[\mathbbm{1}_{\tT_y<\infty}\exp\left(-\int_0^\infty L^y_{T^B_y}\,k(dy)\right)\right]>0.
  \end{align*}

  To prove~\eqref{eq:tau_d>t}, we fix $t>0$. As above, a.s.\ on the event $\{t<\tau_\d^c\}=\{\sigma_t<T^B_0\}$, the function $y\mapsto
  L^y_{\sigma_t}$ is continuous with compact support on $(0,+\infty)$. Since $k$ is locally finite on $(0,+\infty)$, we deduce that  $\kappa_t<\infty$ a.s. Since in addition $\PP_x(\tX_t>0)>0$ for all $t>0$,~\eqref{eq:tau_d>t} follows.
\end{proof}

In the sequel, following the terminology of~\cite{feller-52}, we assume that $+\infty$ is an entrance boundary, or equivalently (for diffusion processes on natural scale)
\begin{equation}
  \label{eq:hyp-kill-infinity-entrance}
  \int_1^\infty y(dm(y)+dk(y))<\infty,
\end{equation}
and that $0$ is either regular or an exit point, or equivalently
\begin{equation}
  \label{eq:hyp-kill-0}
  \int_0^1 y(dm(y)+dk(y))<\infty.
\end{equation}
Note that, in the case where $k=0$, $X=\tX$ is a diffusion without killing. In this case, the assumption $\int_1^\infty
y\,dm(y)<\infty$ corresponds to the fact that $X$ comes down from infinity~\cite{champagnat-villemonais-15b}, i.e.\ that there exist
 $t>0$ and $y>0$ such that
$$
\inf_{x>y}\P_x(T_y<t)>0,
$$
and $\int_0^1 y\,dm(y)<\infty$ is equivalent to assuming that $\tau_\d=\tau^c_\d<\infty$ a.s.

Further general properties of diffusion processes with killing are studied in Section~\ref{sec:pties-diff}, related to the notion of
\emph{coming down from infinity}.

\section{Quasi-stationary distributions for diffusion processes with killing}
\label{sec:QSD-diff-kill}

We consider as above a diffusion process $X$ on $[0,+\infty)$ with killing, on natural scale, with speed measure $m(dx)$ and killing
measure $k(dx)$.

\subsection{Exponential convergence to quasi-stationary distribution}
\label{sec:main-result}

We provide here sufficient criteria ensuring the existence of a unique quasi-stationary distribution for $X$, with exponential
convergence of conditional distributions in total variation. Both criteria involve the diffusion process without killing $\tX$ of
Section~\ref{sec:construction}.

\bigskip\noindent\textbf{Condition (C)}
Assume that $\int_0^\infty y\,(m(dy)+k(dy))<\infty$ and that there exist two constants $t_1,A>0$ such that
\begin{equation}
  \label{eq:hyp-B-prime}
  \PP_x(t_1<\tT_0)\leq Ax,\quad\forall x>0
\end{equation}
and
\begin{align}
  \label{eq:assumption-C-Ax}
  \P_x(\tau_\d^d<\tau_\d^c)\leq Ax,\quad\forall x>0.
\end{align}

\bigskip\noindent\textbf{Condition (C')} Assume that $\int_0^\infty y\,(m(dy)+k(dy))<\infty$ and that there exist three constants
$t_1,A,\varepsilon>0$ such that
\begin{equation*}
  \PP_x(t_1<\tT_0)\leq Ax,\quad\forall x>0
\end{equation*}
and $k$ is absolutely continuous w.r.t. $m$ on $(0,\varepsilon)$ and satisfies
\begin{align}
  \label{eq:C'}
  \frac{d k}{d m}(x)\leq A, \quad \forall x\in(0,\varepsilon).
\end{align}

\bigskip We give practical criteria to check Conditions~(C) and~(C') is Subsection~\ref{sec:C-C'} and we give examples of
applications and compare with existing results in Section~\ref{sec:ex-bib}.

\begin{thm}
  \label{thm:QSD_full_with_killing}
  Assume that $X$ is a one-dimensional diffusion on natural scale with speed measure $m$ and killing measure $k$. If Assumption
  \textup{(C)} or Assumption \textup{(C')} is satisfied, then there exists a unique probability measure $\alpha$ on $(0,+\infty)$ and
  two constants $C,\gamma>0$ such that, for all initial distribution $\mu$ on $E$ such that $\mu(\{0\})<1$,
  \begin{align}
    \label{eq:expo-cv-thm-with-killing}
    \left\|\PP_\mu(X_t\in\cdot\mid t<\tau_\partial)-\alpha(\cdot)\right\|_{TV}\leq C e^{-\gamma t},\ \forall t\geq 0.
  \end{align}
  In this case, $\alpha$ is the unique quasi-stationary distribution for the process.
\end{thm}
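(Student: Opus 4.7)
The plan is to reduce Theorem~\ref{thm:QSD_full_with_killing} to the general criterion of Champagnat--Villemonais~\cite{champagnat-villemonais-15}, which characterizes the uniform exponential convergence~\eqref{eq:expo-cv-thm-with-killing} by two conditions on the sub-Markov semigroup of $X$: (A1) a Dobrushin-type minorization at a single time -- there exist $t_0 > 0$, $c_1 > 0$ and a probability measure $\nu$ on $(0, \infty)$ such that $\PP_x(X_{t_0} \in \cdot,\, t_0 < \tau_\partial) \geq c_1\,\nu(\cdot)$ for all $x > 0$; and (A2) a uniform domination of survival probabilities -- there exists $c_2 > 0$ with $\PP_x(t < \tau_\partial) \leq c_2\,\PP_\nu(t < \tau_\partial)$ for all $x > 0$ and $t \geq 0$. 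The proof then splits naturally into three tasks: verifying (A1) under~(C), verifying (A2) under~(C), and reducing~(C') to~(C).

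For (A1), I would exploit that $+\infty$ is an entrance boundary (condition~\eqref{eq:hyp-kill-infinity-entrance}), which forces $\tX$ to come down from infinity in the sense that there exist $b > 0$ and $t' > 0$ with $\inf_{x > 0} \PP_x(\tT_b \leq t') > 0$. Since $k$ is locally finite on $(0, \infty)$, the probability of avoiding killing up to time $t'$ on the event $\{\tT_b \leq t',\, \inf_{s \leq t'} \tX_s \geq \eta\}$ is bounded below uniformly in the starting point, for any small $\eta > 0$. Standard heat-kernel lower bounds for the unkilled diffusion on a compact $[\eta, b]$, combined with the boundedness of $k$ there, then produce a density lower bound for the sub-Markov kernel that translates into the desired minorization by Lebesgue measure on a sub-interval $[a, b'] \subset (\eta, b)$.

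The heart of the argument is~(A2), and this is where the linear bounds~\eqref{eq:hyp-B-prime} and~\eqref{eq:assumption-C-Ax} of Condition~(C) enter. Since $\tau_\partial = \tT_0 \wedge \tau_\kappa$ and $\{\tau_\partial^d < \tau_\partial^c\} = \{\tau_\kappa < \tT_0\}$, a simple union bound yields $\PP_x(t_1 < \tau_\partial) \leq \PP_x(t_1 < \tT_0) + \PP_x(\tau_\partial^d < \tau_\partial^c) \leq 2Ax$ for every $x > 0$. Applying the semigroup factorisation $\PP_x(t < \tau_\partial) \leq \PP_x(t_1 < \tau_\partial) \cdot \sup_y \PP_y(t - t_1 < \tau_\partial)$, combined with the lower bound on $\PP_\nu(t - t_1 < \tau_\partial)$ provided by (A1) and strong Markov, one controls the ratio $\PP_x(t < \tau_\partial)/\PP_\nu(t < \tau_\partial)$ uniformly in $x$ and $t$, which is precisely~(A2).

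Finally, to establish~(C')~$\Rightarrow$~(C), the natural tool is the local time representation~\eqref{eq:kappa_t}: $\kappa_{\tT_0} = \int_0^\infty L^y_{T^B_0}\,k(dy)$, combined with the Brownian identity $\EE_x[L^y_{T^B_0}] = 2(x \wedge y)$. This gives $\PP_x(\tau_\partial^d < \tau_\partial^c) \leq \EE_x[\kappa_{\tT_0}] = 2\int_0^\infty (x \wedge y)\,k(dy)$. The integral over $[\varepsilon, \infty)$ is linear in $x$ since $k$ has finite mass there (by local finiteness and $\int_1^\infty y\,k(dy) < \infty$), while the integral over $(0, \varepsilon)$ is controlled via~\eqref{eq:C'} by $2A \int_0^\varepsilon (x \wedge y)\,m(dy)$, which must then be linked back to the hitting bound~\eqref{eq:hyp-B-prime} for $\tX$. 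I expect this last point to be the main technical obstacle: upgrading mere integrability of $y\,m(dy)$ near $0$ to a pointwise linear-in-$x$ estimate requires genuinely using the linear survival hypothesis retained in~(C'), and captures the delicate interplay between the speed and killing measures in a neighbourhood of the absorbing boundary.
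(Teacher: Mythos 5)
Your top-level reduction---verify conditions (A1)--(A2) of \cite{champagnat-villemonais-15}---is exactly the paper's strategy, but three of your steps contain genuine gaps. First, you state (A1) for the \emph{unconditioned} sub-Markov kernel, $\PP_x(X_{t_0}\in\cdot,\,t_0<\tau_\d)\geq c_1\nu(\cdot)$ uniformly in $x>0$. This is impossible: under (C), $\PP_x(t_0<\tau_\d)\leq\PP_x(t_0<\tT_0)\leq Ax\rightarrow 0$ as $x\rightarrow 0$ (for $t_0\geq t_1$), so the left-hand side has vanishing total mass near the boundary. The criterion of \cite{champagnat-villemonais-15} minorizes the \emph{conditional} law $\PP_x(X_{t_0}\in\cdot\mid t_0<\tau_\d)$, and accordingly your plan for (A1) (coming down from infinity plus heat-kernel bounds on a compact $[\eta,b]$) covers only initial points bounded away from $0$; it misses the crux, namely that for $x$ arbitrarily close to $0$ the process \emph{conditioned on survival} escapes a neighborhood of $0$ with probability bounded below (Step 1 of the paper's proof). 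That step is where the linear bounds \eqref{eq:hyp-B-prime} and \eqref{eq:assumption-C-Ax} are actually consumed, via the local-martingale identity $x=\E_x(\tX_{t_1\wedge\tT_1\wedge\tau_\kappa})$ and $\PP_x(t_1\wedge\tau_\kappa<\tT_0)\leq 2Ax$---not in (A2), as you assert. Relatedly, your argument for (A2) is circular: after the factorization $\PP_x(t<\tau_\d)\leq\PP_x(t_1<\tau_\d)\,\sup_y\PP_y(t-t_1<\tau_\d)$, you still must compare $\sup_y\PP_y(t-t_1<\tau_\d)$ with $\PP_\nu(t-t_1<\tau_\d)$ uniformly in $t$, which \emph{is} (A2). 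The paper's Step 4 needs genuinely new input: exponential moments $\sup_{x\geq y_\rho}\E_x(e^{\rho\tT_{y_\rho}})<\infty$ coming from the entrance boundary at $\infty$, the renewal bound $\P_a(X_{kt_0}\geq a)\geq e^{-\rho kt_0}$, and the uniform positivity $\inf_{x>0}\P_x(\tau^c_\d<\tau^d_\d)>0$ supplied by Theorem~\ref{thm:pitman-yor}.

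Second, the reduction (C')\,$\Rightarrow$\,(C) you propose is not merely technically obstructed---it is false, so the ``main technical obstacle'' you flag cannot be overcome. Take $m(dy)=k(dy)=y^{-3/2}\,dy$ near $0$, suitably cut off at infinity: then $\int_0^1 y\,(m(dy)+k(dy))<\infty$, condition \eqref{eq:matsumoto} holds so \eqref{eq:hyp-B-prime} follows from Theorem~\ref{thm:h-transfo-3}, and $dk/dm\equiv 1$, so (C') is satisfied; yet $\E_x[\kappa_{\tT_0}]=2\int_0^\infty(x\wedge y)\,k(dy)\asymp\sqrt{x}$, and a second-moment (Paley--Zygmund) estimate on $U:=\int_0^\infty L^y_{T^B_0}\,k(dy)$---using $\E_x[L^yL^z]=4(y\wedge z)\left[(x\wedge y)+(x\wedge z)\right]$, whence $\E_x[U^2]=O(x\log(1/x))$---yields $\P_x(\tau^d_\d<\tau^c_\d)\geq c\sqrt{x}/\log(1/x)$, incompatible with the linear bound \eqref{eq:assumption-C-Ax}. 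This is precisely why the paper does not deduce (C) from (C'): instead it reruns Step 1 separately under (C') (Subsection~\ref{sec:(C')->(A)}), using that \eqref{eq:C'} together with the time change $A_{\sigma_t}=t$ gives the pathwise bound $\kappa_t\leq At$ for all $t\leq\tT_\varepsilon$, combined with the estimate $\PP_x(\tT_b<t_1\wedge\tT_0)\geq c\,\PP_x(t_1<\tT_0)$ imported from \cite{champagnat-villemonais-15b}; Steps 2--4 then go through unchanged. Your first-moment identity $\E_x[L^y_{T^B_0}]=2(x\wedge y)$ is correct, but first moments alone can never produce a linear-in-$x$ killing bound in this regime.
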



We recall that the quasi-stationary distribution $\alpha$ of Thm.~\ref{thm:QSD_full_with_killing} satisfies the following classical
property: there exists $\lambda_0>0$ such that $\P_\alpha(t<\tau_\d)=e^{-\lambda_0 t}$. The same assumptions also entail the
following result on the asymptotic behavior of absorption probabilities.

\begin{prop}
  \label{prop:eta-with-killing}
  Assume that $X$ is a one-dimensional diffusion on natural scale with speed measure $m$ and killing measure $k$. If Assumption
  \textup{(C)} or Assumption \textup{(C')} is satisfied, then there exists a bounded function $\eta:(0,\infty)\rightarrow(0,\infty)$
  such that
  \begin{align}
    \label{eq:convergence-to-eta-with-killing}
    \eta(x)=\lim_{t\rightarrow\infty} \frac{\P_x(t<\tau_\d)}{\P_\alpha(t<\tau_\d)}=\lim_{t\rightarrow +\infty} e^{\lambda_0 t}\P_x(t<\tau_\d),
  \end{align}
  where the convergence holds for the uniform norm. Moreover, $\alpha(\eta)=1$, $\eta(x)\leq C x$ for all $x\geq 0$ and some constant
  $C>0$, and $\eta$ belongs to the domain of the infinitesimal generator $L$ of $X$ on the set of bounded measurable functions on
  $[0,+\infty)$ equipped with uniform norm and
  \begin{align}
    \label{eq:eigen-function-with-killing}
    L\eta=-\lambda_0\eta.
  \end{align}
\end{prop}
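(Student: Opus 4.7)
The plan is to deduce Proposition~\ref{prop:eta-with-killing} from Theorem~\ref{thm:QSD_full_with_killing} together with the general machinery of \cite{champagnat-villemonais-15}, which shows that uniform exponential convergence in total variation of conditional distributions is in fact equivalent to the existence of a bounded "right eigenvector" $\eta$ satisfying the convergence \eqref{eq:convergence-to-eta-with-killing} uniformly. The work specific to the present setting then splits into three tasks: (a) extract $\eta$ as a genuine uniform limit and verify that $\alpha(\eta)=1$; (b) obtain the linear control $\eta(x)\leq Cx$ near $0$; (c) upgrade the semigroup identity $P_t\eta = e^{-\lambda_0 t}\eta$ to the generator identity $L\eta=-\lambda_0\eta$ in uniform norm.

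First I would apply Theorem~\ref{thm:QSD_full_with_killing} to $\mu=\delta_x$: combining the exponential bound $\|\PP_x(X_s\in\cdot\mid s<\tau_\partial)-\alpha\|_{TV}\leq Ce^{-\gamma s}$ with the Markov property at time $s$, written in the form
\[
e^{\lambda_0(s+t)}\PP_x(s+t<\tau_\partial) = e^{\lambda_0 s}\EE_x\!\left[\mathbbm{1}_{s<\tau_\partial}\,e^{\lambda_0 t}\PP_{X_s}(t<\tau_\partial)\right],
\]
a standard Cauchy-in-$t$ argument (as in Section 2 of \cite{champagnat-villemonais-15}) shows that $e^{\lambda_0 t}\PP_x(t<\tau_\partial)$ converges, uniformly in $x$, to a bounded function $\eta(x)\geq 0$, which is positive thanks to \eqref{eq:tau_d>t}. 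The identity $\alpha(\eta)=1$ follows by integrating the limit against $\alpha$, interchanging limit and integral by uniform convergence, and using the defining property $\PP_\alpha(t<\tau_\partial)=e^{-\lambda_0 t}$ of the quasi-stationary distribution.

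For the bound $\eta(x)\leq Cx$, I would apply Markov at the fixed time $t_1$ appearing in Conditions (C) and (C'):
\[
e^{\lambda_0 t}\PP_x(t<\tau_\partial) = e^{\lambda_0 t_1}\EE_x\!\left[\mathbbm{1}_{t_1<\tau_\partial}\,e^{\lambda_0(t-t_1)}\PP_{X_{t_1}}(t-t_1<\tau_\partial)\right].
\]
Using the uniform bound on $e^{\lambda_0 s}\PP_y(s<\tau_\partial)$, dominated convergence gives $\eta(x)=e^{\lambda_0 t_1}\EE_x[\mathbbm{1}_{t_1<\tau_\partial}\eta(X_{t_1})]\leq e^{\lambda_0 t_1}\|\eta\|_\infty\PP_x(t_1<\tau_\partial)$. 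Since $\tau_\partial\leq\tT_0$, we have $\PP_x(t_1<\tau_\partial)\leq\PP_x(t_1<\tT_0)\leq Ax$ by \eqref{eq:hyp-B-prime}, which is an assumption of both (C) and (C'), yielding the linear control. Taking the limit $t\to\infty$ in the same Markov decomposition (but at a general time $s$ instead of $t_1$) gives the semigroup identity $P_s\eta(x)=\EE_x[\mathbbm{1}_{s<\tau_\partial}\eta(X_s)]=e^{-\lambda_0 s}\eta(x)$ for all $s\geq 0$; hence $\frac{P_s\eta-\eta}{s}=\frac{e^{-\lambda_0 s}-1}{s}\eta\to -\lambda_0\eta$ in uniform norm as $s\downarrow 0$, which means $\eta$ lies in the uniform-norm domain of $L$ and $L\eta=-\lambda_0\eta$. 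The main obstacle I anticipate is checking that the dominated convergence steps transferring pointwise control on $e^{\lambda_0 s}\PP_y(s<\tau_\partial)$ to the limiting identities are indeed justified uniformly in the starting point; this is where the uniform (rather than merely pointwise) character of the convergence in Theorem~\ref{thm:QSD_full_with_killing} is essential.
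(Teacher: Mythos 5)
Your proposal is correct and takes essentially the same route as the paper: the main part of the proposition (uniform convergence to $\eta$, positivity, $\alpha(\eta)=1$, the semigroup identity and hence $L\eta=-\lambda_0\eta$ in uniform norm) is exactly what the paper obtains from the general machinery of \cite{champagnat-villemonais-15} once condition (A) is in force. Your argument for the only part the paper proves by hand, $\eta(x)\leq Cx$, matches Subsection~\ref{sec:proof-eta-with-killing} verbatim in substance: apply the Markov property at the time $t_1$ of (C)/(C'), use the uniform boundedness of $e^{\lambda_0 s}\PP_y(s<\tau_\d)$ coming from the uniform convergence in~\eqref{eq:convergence-to-eta-with-killing}, and conclude via $\PP_x(t_1<\tau_\d)\leq\PP_x(t_1<\tT_0)\leq Ax$.
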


Still under the same assumptions, we also obtain the exponential ergodicity of the $Q$-process, which is defined as the process $X$
conditioned to never be absorbed.

\begin{thm}
  \label{thm:Q-proc-with-killing}
  Assume that $X$ is a one-dimensional diffusion on natural scale with speed measure $m$ and killing measure $k$. If Assumption
  \textup{(C)} or Assumption \textup{(C')} is satisfied, then the three following properties hold true.
  \begin{description}
  \item[\textmd{(i) Existence of the $Q$-process.}] There exists a family $(\QQ_x)_{x>0}$ of
    probability measures on $\Omega$ defined by
    $$
    \lim_{t\rightarrow+\infty}\PP_x(A\mid t<\tau_\partial)=\QQ_x(A)
    $$
    for all ${\cal F}_s$-measurable set $A$, for all $s\geq 0$. The process $(\Omega,({\cal F}_t)_{t\geq 0},(X_t)_{t\geq 0},(\QQ_x)_{x>0})$ is a
    $(0,\infty)$-valued homogeneous strong Markov process. 
  \item[\textmd{(ii) Transition kernel.}] The transition kernel of the Markov process $X$ under $(\QQ_x)_{x>0}$ is given by
    \begin{align*}
      \tilde{p}(x;t,dy)=e^{\lambda_0 t}\frac{\eta(y)}{\eta(x)}p(x;t,dy)\mathbbm{1}_{y>0},\quad\forall t\geq 0,\ x>0,
    \end{align*}
    where $p(x;t,dy)$ is the transition distribution of the diffusion with killing $X$. 
  \item[\textmd{(iii) Exponential ergodicity.}] The probability measure $\beta$ on $(0,+\infty)$ defined by
    \begin{align*}
      \beta(dx)=\eta(x)\alpha(dx).
    \end{align*}
    is the unique invariant distribution of $X$ under $\QQ$. Moreover, there exist positive constants $C,\gamma$ such that, for any
    initial distributions $\mu_1,\mu_2$ on $(0,+\infty)$,
    \begin{align*}
      \left\|\Q_{\mu_1}(X_t\in\cdot)-\Q_{\mu_2}(X_t\in\cdot)\right\|_{TV}\leq Ce^{-\gamma t}\|\mu_1-\mu_2\|_{TV},
    \end{align*}
    where $\Q_\mu=\int_{(0,\infty)} \Q_x\,\mu(dx)$.
  \end{description}
\end{thm}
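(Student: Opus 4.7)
The three statements are consequences of Theorem~\ref{thm:QSD_full_with_killing} and Proposition~\ref{prop:eta-with-killing}: these jointly supply the uniform exponential convergence $\P_\mu(X_t\in\cdot\mid t<\tau_\partial)\to\alpha$ and a bounded, strictly positive eigenfunction $\eta$ on $(0,\infty)$ satisfying $\E_x[\eta(X_t)\mathbbm{1}_{t<\tau_\partial}]=e^{-\lambda_0 t}\eta(x)$, which are exactly the ingredients of the general $Q$-process theory for absorbed Markov processes. The plan is to realize $\QQ_x$ as the Doob $h$-transform of $(X_t)$ by the space-time harmonic function $(t,y)\mapsto e^{\lambda_0 t}\eta(y)$, and then transfer the total-variation contraction from Theorem~\ref{thm:QSD_full_with_killing} through this transformation.

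For (i), fixing $s\geq 0$ and $A\in\mathcal{F}_s$, the Markov property at time $s$ gives, for $t\geq s$,
\begin{align*}
\P_x(A\cap\{t<\tau_\partial\}) = \E_x\!\left[\mathbbm{1}_A\,\mathbbm{1}_{s<\tau_\partial}\,\P_{X_s}(t-s<\tau_\partial)\right].
\end{align*}
Dividing by $\P_x(t<\tau_\partial)$ and multiplying numerator and denominator by $e^{\lambda_0 t}$, the uniform convergence~\eqref{eq:convergence-to-eta-with-killing} combined with the boundedness and strict positivity of $\eta$ allows passage to the limit by dominated convergence, yielding
\begin{align*}
\QQ_x(A) = \frac{e^{\lambda_0 s}\,\E_x\!\left[\mathbbm{1}_A\,\mathbbm{1}_{s<\tau_\partial}\,\eta(X_s)\right]}{\eta(x)}.
\end{align*}
The case $A=\Omega$ simply reproduces the eigenrelation, so $\QQ_x$ is a probability measure; the strong Markov property is inherited from $\P_x$ via this $h$-transform formula. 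Part (ii) follows immediately by specializing $A=\{X_t\in B\}$ with $s=t$, which reads off $\widetilde p(x;t,dy)$.

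For (iii), invariance of $\beta(dy)=\eta(y)\alpha(dy)$ under $\widetilde p$ is a direct verification using the QSD identity $\alpha p(t,\cdot)=e^{-\lambda_0 t}\alpha(\cdot)$ together with the kernel formula from (ii). The main obstacle is the exponential ergodicity in total variation: one must transfer the uniform contraction of the conditional distributions $\mu\mapsto\P_\mu(X_t\in\cdot\mid t<\tau_\partial)$ provided by Theorem~\ref{thm:QSD_full_with_killing} to the $Q$-semigroup. My plan is to write $\QQ_\mu(X_t\in\cdot)$ as a reweighted conditional distribution with density proportional to $\eta(y)$, then split the TV distance between $\QQ_{\mu_1}(X_t\in\cdot)$ and $\QQ_{\mu_2}(X_t\in\cdot)$ into a contribution governed by the TV contraction of Theorem~\ref{thm:QSD_full_with_killing} and a contribution coming from the mismatch of the normalizing factors $\mu_i(\eta)$ versus $e^{\lambda_0 t}\P_{\mu_i}(t<\tau_\partial)$, the latter controlled again by~\eqref{eq:convergence-to-eta-with-killing}. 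The delicate point is that $\eta(x)\leq Cx$, so the $\eta$-reweighting degenerates near the absorbing boundary; this is exactly where Condition~(C) or~(C')---already invoked inside Theorem~\ref{thm:QSD_full_with_killing}---must be used once more, to guarantee that the $Q$-semigroup pushes mass away from $0$ fast enough for the bound to remain uniform in $\mu_1,\mu_2$.
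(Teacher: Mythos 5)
Your route is genuinely different from the paper's, and it is worth saying how: the paper never re-derives the $Q$-process machinery. Its proof of Theorem~\ref{thm:Q-proc-with-killing} consists entirely of verifying the minorization/comparison conditions (A1)--(A2) of Section~\ref{sec:proof-results-kill} under (C) or (C') and then quoting \cite[Thm.\,3.1]{champagnat-villemonais-15}, which delivers (i)--(iii) wholesale. You instead take Theorem~\ref{thm:QSD_full_with_killing} and Proposition~\ref{prop:eta-with-killing} as black boxes and reconstruct the $h$-transform by hand. That is not circular (both inputs are proved independently of Theorem~\ref{thm:Q-proc-with-killing}), and your parts (i), (ii) and the invariance of $\beta$ in (iii) are correctly sketched: the identity $\QQ_x(A)=e^{\lambda_0 s}\,\EE_x[\mathbbm{1}_A\mathbbm{1}_{s<\tau_\partial}\eta(X_s)]/\eta(x)$ does follow by dominated convergence from the uniform convergence in~\eqref{eq:convergence-to-eta-with-killing} together with boundedness and positivity of $\eta$, and the invariance computation uses only $\alpha p_t=e^{-\lambda_0 t}\alpha$ and $\alpha(\eta)=1$.

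The genuine gap is the exponential ergodicity in (iii). First, your ``reweighted conditional distribution'' identity is not what you state: the weight $1/\eta(x)$ sits on the \emph{initial} law, $\QQ_\mu(X_t\in dy)=e^{\lambda_0 t}\eta(y)\int \mu(dx)\,p(x;t,dy)/\eta(x)$, and the biased measure $\mu/\eta$ may have infinite mass since $\eta(x)\leq Cx$. Second, the normalization mismatch you plan to control via~\eqref{eq:convergence-to-eta-with-killing} is only an \emph{additive} bound $\|e^{\lambda_0 t}\PP_\cdot(t<\tau_\partial)-\eta\|_\infty\leq Ce^{-\gamma t}$; after dividing by $\eta(x)$, which vanishes at $0$, uniformity in $x$ is lost, and your proposed remedy---invoking (C)/(C') ``once more'' so the $Q$-semigroup escapes $0$---is not an argument: making it one would amount to re-proving the minorization (A1), which does \emph{not} follow from the total-variation convergence of Theorem~\ref{thm:QSD_full_with_killing} (TV-closeness to $\alpha$ yields no lower bound by a fixed measure). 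Third, nothing in your splitting produces the multiplicative factor $\|\mu_1-\mu_2\|_{TV}$; at best it gives $2Ce^{-\gamma t}$. The repair, staying within your chosen inputs, is: since $\QQ_x(X_t\in\cdot)$ is a probability, the offending factor self-normalizes, $e^{\lambda_0 t}\PP_x(t<\tau_\partial)/\eta(x)=1/\EE_x[\eta(X_t)\mid t<\tau_\partial]$, and $|\EE_x[\eta(X_t)\mid t<\tau_\partial]-\alpha(\eta)|\leq 2\|\eta\|_\infty Ce^{-\gamma t}$ with $\alpha(\eta)=1$, which gives $\sup_x\|\QQ_x(X_t\in\cdot)-\beta\|_{TV}\leq C'e^{-\gamma t}$ for $t\geq t_*$ (and trivially $\leq 2$ for $t\leq t_*$); the factor $\|\mu_1-\mu_2\|_{TV}$ then comes from linearity of $\mu\mapsto\QQ_\mu(X_t\in\cdot)$ applied to the Hahn--Jordan decomposition $\mu_1-\mu_2=\sigma_+-\sigma_-$ (equal masses, each $\leq\|\mu_1-\mu_2\|_{TV}$). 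Alternatively one follows the paper: from (A1) one gets the Doeblin minorization $\tilde p(x;t_0,dy)\geq c_1\,\eta(y)\nu(dy)/(e^{\lambda_0 t_0}\|\eta\|_\infty)$ using $\eta(x)\leq e^{\lambda_0 t_0}\|\eta\|_\infty\PP_x(t_0<\tau_\partial)$. Without one of these mechanisms, the claimed contraction---and hence also the uniqueness of $\beta$, which you derive from it---is not established.
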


The proofs of these results are given in Section~\ref{sec:proof-results-kill}.

\subsection{On Conditions~(C) and~(C')}
\label{sec:C-C'}

The first step to check Conditions~(C) or~(C') is to check~\eqref{eq:hyp-B-prime}. This condition is the one ensuring exponential
convergence of conditional distributions of general diffusion processes without killing~\cite[Thm.\,3.1]{champagnat-villemonais-15b},
and it has been extensively studied in~\cite{champagnat-villemonais-15b}. We recall here the most general criterion, which applies to
all practical cases where $\infty$ is entrance and $0$ is exit or natural, since it fails only when $m$ has strong oscillations near
0 (see~\cite[Rk.\,4]{champagnat-villemonais-15b}).

\begin{thm}[\cite{champagnat-villemonais-15b} Thm~3.7]
  \label{thm:h-transfo-3}
  Assume that $\int_0^\infty y\, m(dy)<\infty$ and
  \begin{equation}
    \label{eq:matsumoto}
    \int_0^1\frac{1}{x}\,\sup_{y\leq x}\left(\frac{1}{y}\int_{(0,y)} z^2\,m(dz)\right)\,dx<\infty.
  \end{equation}
  Then, for all
  $t>0$, there exists $A_t<\infty$ such that
  \begin{align*}
    \P_x(t<\tT_0)\leq A_t\, x,\ \forall x>0.
  \end{align*}
\end{thm}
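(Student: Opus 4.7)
The plan is to prove the estimate by combining the time-change representation from Section~\ref{sec:construction} with a first-passage decomposition, Green-function computations, and an iteration step that extracts a linear bound in $x$ from the integral condition~\eqref{eq:matsumoto}.

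First, I would reformulate the problem via the time change. Under $\PP_x$, $\tX_t=B_{\sigma_t}$ with $B_0=x$, and by~\eqref{eq:lien-tau-d-brownien}, $\{t<\tT_0\}=\{\sigma_t<T^B_0\}$. Fix an auxiliary level $y>x$ and let $T_y$ denote the first hitting time of $y$ by $\tX$. Since $\tX$ is on natural scale, optional stopping on the local martingale $\tX_{\cdot\wedge \tT_0\wedge T_y}$ gives $\PP_x(T_y<\tT_0)=x/y$, whence
\begin{equation*}
\PP_x(t<\tT_0)\leq \frac{x}{y}+\PP_x(t<\tT_0\wedge T_y).
\end{equation*}
On $\{t<\tT_0\wedge T_y\}$ the process is confined to $[0,y]$, so Markov's inequality and the Green kernel of the natural scale diffusion killed at $\{0,y\}$, namely $G_y(x,z)=2(x\wedge z)(y-x\vee z)/y$, give
\begin{equation*}
\PP_x(t<\tT_0\wedge T_y)\leq \frac{1}{t}\EE_x[\tT_0\wedge T_y]=\frac{1}{t}\int_0^y G_y(x,z)\,m(dz).
\end{equation*}
Splitting the integral at $x$ produces one summand $\tfrac{2x}{y}\int_x^y(y-z)m(dz)\leq 2x\,m((x,y))$ which is linear in $x$ for any fixed $y$ (as $m$ is locally finite), and one summand $\tfrac{2(y-x)}{y}\int_0^x z\,m(dz)$, which is the delicate part.

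Second, I would iterate using the Markov property to upgrade this crude bound to a genuinely linear one in $x$. Writing $\phi_t(x):=\PP_x(t<\tT_0)$ and using $\phi_t(x)=\EE_x[\phi_{t/2}(\tX_{t/2})\,\11_{t/2<\tT_0}]$, one feeds the first-step bound into itself. The key mechanism is that the bootstrapped bound concentrates on $\tX_{t/2}$ being small, and the integral against $m$ of the quantity $\int_0^{\tX_{t/2}} z\,m(dz)$ is controlled precisely by the Matsumoto-type quantity $\tfrac{1}{y}\int_{(0,y)}z^2 m(dz)$ appearing in~\eqref{eq:matsumoto}. Taking $\sup$ over $y\leq \tX_{t/2}$ yields a monotone function that, after one integration in $x$ (through the scale-invariance $\PP_x(\tX_{t/2}\in dz)\lesssim \text{dens}$), becomes exactly the integrand in~\eqref{eq:matsumoto}. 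Finiteness of that integral then closes the bootstrap and delivers a linear bound $\phi_t(x)\leq A_t x$ for small $x$.

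Third, for $x$ bounded away from $0$ (say $x\geq x_0$) the linear bound is automatic since $\phi_t(x)\leq 1\leq x/x_0$, so one only needs to patch the two regimes and adjust the constant $A_t$. The main obstacle I expect is the iteration step: producing the clean bootstrap inequality requires uniform control of $\int_0^{z} w\,m(dw)$ for $z$ near zero that does not itself depend on the behavior of $m$ one is trying to control. The role of the $\sup_{y\leq x}$ and the weighting by $z^2/y$ in~\eqref{eq:matsumoto} is precisely to provide this uniform control while tolerating the possible unboundedness of $m((0,x))$ near $0$; writing down the exact monotone majorant that makes the iteration close is the technical heart of the argument.
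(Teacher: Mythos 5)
Your Step~1 is fine as far as it goes: the decomposition via $\P_x(T_y<\tT_0)=x/y$, Chebyshev, and the Green-function identity $\E_x[\tT_0\wedge T_y]=\int_0^y G_y(x,z)\,m(dz)$ are all correct. But the conclusion you draw from it already fails: the summand $\frac{2x}{y}\int_x^y(y-z)\,m(dz)\leq 2x\,m((x,y))$ is \emph{not} $O(x)$ for fixed $y$, because $m$ is only locally finite on the open half-line, and the standing hypotheses allow $m((0,y))=\infty$. Take $m(dz)=z^{-3/2}\,dz$ near $0$: then $\int_0^1 z\,m(dz)<\infty$ and \eqref{eq:matsumoto} holds (the inner quantity equals $\tfrac23 y^{1/2}$), yet $2x\,m((x,y))\asymp \sqrt{x}$ and the ``delicate'' term $\tfrac{2(y-x)}{y}\int_0^x z\,m(dz)\asymp\sqrt{x}$ as well. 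In this example $\E_x[\tT_0]\asymp\sqrt{x}$ while $\P_x(t<\tT_0)\asymp x$, so any bound through first moments of $\tT_0\wedge T_y$ is intrinsically lossy: Step~1 can only ever produce an estimate of order $\sqrt{x}$, not $x$.

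The entire burden therefore falls on the bootstrap of Step~2, and that step is not an argument. If you actually feed the order-$\sqrt{x}$ bound into $\phi_t(x)=\E_x[\phi_{t/2}(\tX_{t/2})\,\11_{t/2<\tT_0}]$ (using $\E_x[\tX_{t/2}\11_{t/2<\tT_0}]\leq x$ and Cauchy--Schwarz), you get exponents improving like $\alpha_{n+1}=(1+\alpha_n)/2$, which approach $1$ but never reach it, with degenerating constants; nothing in your sketch shows the iteration closes. Worse, the place where \eqref{eq:matsumoto} is supposed to enter rests on an invented ``scale-invariance $\P_x(\tX_{t/2}\in dz)\lesssim\text{dens}$'': no such density bound exists for general $m$ (atoms are allowed), and a kernel bound of the form $\P_x(\tX_{t/2}\in dz)\leq C_t\,x\,\theta(dz)$ is essentially the statement being proved. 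Since \eqref{eq:matsumoto} appears in your plan only rhetorically while your concrete estimates use only $\int_0^1 z\,m(dz)<\infty$, the plan cannot be complete: the linear bound is not a consequence of the latter condition alone. Note also that the present paper does not prove this theorem; it quotes it from \cite{champagnat-villemonais-15b} (Thm~3.7), where the proof — as the label ``h-transfo'' hints — goes through the Doob $h$-transform by $h(x)=x$: the diffusion conditioned never to hit $0$ has scale function $-1/x$ and speed measure $z^2\,m(dz)$, one has the identity $\P_x(t<\tT_0)=x\,\E^{\uparrow}_x\bigl[1/X^{\uparrow}_t\bigr]$, and \eqref{eq:matsumoto} is precisely the entrance-type condition at $0$ for the measure $z^2m(dz)$ ensuring the uniform bound $\sup_{x>0}\E^{\uparrow}_x\bigl[1/X^{\uparrow}_t\bigr]\leq A_t$. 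That identity is exactly what converts the heavy-tail loss in your Chebyshev step into a tractable moment bound for the $h$-process; your proposal contains no substitute for it.
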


Using this result, the remaining conditions in~(C) and~(C') only deal with the behavior of the diffusion process $X$ near 0. We
provide two sufficient criteria for~\eqref{eq:assumption-C-Ax} in Condition~(C) in Proposition~\ref{prop:diff-kill-C1} and
Proposition~\ref{prop:diff-kill-C2}. Note that checking property~\eqref{eq:C'} in Condition (C') from given measures $m$ and $k$ is
straightforward.

\begin{prop}
  \label{prop:diff-kill-C1}
  Assume that $\int_0^\infty y\,k(dy)<\infty$ and that there exist $\varepsilon>0$ and a constant $C>0$ such that $k$ is absolutely
  continuous w.r.t. the Lebesgue measure $\Lambda$ on $(0,\varepsilon)$ and such that
  \begin{align}
    \label{eq:1/x}
    \frac{d k}{d \Lambda}(x)\leq \frac{C}{x}, \ \forall x\in(0,\varepsilon).
  \end{align}
  Then there exists a positive constant $A>0$ such that
  \begin{align*}
    \P_x(\tau_\d^d<\tau_\d^c)\leq A x.
  \end{align*}
\end{prop}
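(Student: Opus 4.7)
The plan is to reduce the computation to a purely Brownian one via the pathwise construction of $X$ in Section~\ref{sec:construction}, and then to bound the resulting expectation using the density assumption on $k$ together with the integrability $\int_0^\infty y\,k(dy)<\infty$.

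First, from the construction together with the independence of $\mathcal E\sim\mathrm{Exp}(1)$ from $B$, and the identity $\sigma_{\tilde T_0}=T^B_0$ of~\eqref{eq:lien-tau-d-brownien}, I have $\{\tau_\d^d<\tau_\d^c\}=\{\tau_\kappa<\tilde T_0\}=\{\mathcal E<\kappa_{\tilde T_0}\}$ with $\kappa_{\tilde T_0}=\int_0^\infty L^y_{T^B_0}\,k(dy)$. Conditioning on $B$ and using $1-e^{-u}\le u$ yields
$$
\P_x(\tau_\d^d<\tau_\d^c)=\E_x\!\bigl[1-e^{-\kappa_{\tilde T_0}}\bigr]\le\E_x[\kappa_{\tilde T_0}]=\int_0^\infty\E_x\!\bigl[L^y_{T^B_0}\bigr]\,k(dy).
$$

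Next I would invoke the classical Green's function identity $\E_x[L^y_{T^B_0}]=2(x\wedge y)$ (obtained by solving $\tfrac12 u''=-\delta_y$ on $(0,\infty)$ with $u(0)=0$), which reduces the claim to an estimate of $\int(x\wedge y)\,k(dy)$. The case $x\ge\varepsilon$ is immediate since the probability is trivially bounded by $1\le x/\varepsilon$. For $x<\varepsilon$ I split the integral into the ranges $(0,x)$, $(x,\varepsilon)$, and $[\varepsilon,\infty)$. The first satisfies $\int_0^x y\,k(dy)\le Cx$ by~\eqref{eq:1/x} (since $y\cdot dk/d\Lambda(y)\le C$), and the third is bounded by $x\cdot k([\varepsilon,\infty))\le(x/\varepsilon)\int_0^\infty y\,k(dy)=O(x)$ thanks to the integrability hypothesis.

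The delicate piece, and the main obstacle, is the middle integral $x\int_x^\varepsilon k(dy)$: the direct density estimate gives only $Cx\log(\varepsilon/x)$, while the complementary interpolation $x\,k((x,\varepsilon))\le\int_x^\varepsilon y\,k(dy)\le C(\varepsilon-x)$ yields only a bounded constant, so neither bound alone produces an $O(x)$ contribution. I expect the proof to close this gap by combining both bounds on a dyadic partition of $(x,\varepsilon)$ into scales of order $2^{-j}\varepsilon$, balancing the logarithmic loss from $dk/d\Lambda\le C/y$ on each ring against the integrability tail of $y\,k(dy)$; alternatively one might sharpen the first step from the Markov-type bound $\P(\mathcal E<\kappa_{\tilde T_0})\le\E[\kappa_{\tilde T_0}]$ to a second-moment or concentration estimate for $\kappa_{\tilde T_0}$. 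This interpolation between the local singularity of $dk/d\Lambda$ at $0$ and the global integrability of $y\,k(dy)$ is the real technical hurdle of the argument.
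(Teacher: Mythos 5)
Your reduction to $\P_x(\tau_\d^d<\tau_\d^c)=\E_x\bigl[1-e^{-\kappa_{\tT_0}}\bigr]$ with $\kappa_{\tT_0}=\int_0^\infty L^y_{T^B_0}\,k(dy)$ is correct and coincides with the paper's opening step, and your diagnosis of the obstacle is accurate --- but the gap you flag is genuine and neither of your proposed repairs can close it. For the borderline case $k(dy)=C y^{-1}\,dy$ on $(0,\varepsilon)$ the first-moment computation is \emph{exact}, not lossy: $\E_x[\kappa_{\tT_0}]=\int 2(x\wedge y)\,k(dy)=2Cx\log(\varepsilon/x)+O(x)$, so the logarithm is intrinsic to the first moment and no dyadic repartition of first-moment estimates can recover $O(x)$ from a quantity whose exact value is of order $x\log(1/x)$. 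The second-moment route fails as well: since $\E_x[\kappa_{\tT_0}^2]\geq(\E_x[\kappa_{\tT_0}])^2$, a Chebyshev bound $\P_x(\kappa_{\tT_0}>\delta)\leq\E_x[\kappa_{\tT_0}^2]/\delta^2$ with the necessary choice $\delta=O(x)$ is even worse than Markov, and no generic concentration estimate is available because the large mean of $\kappa_{\tT_0}$ comes from a rare event (long survival of the critical excursion structure of the local-time field) with a heavy conditional tail. The right functional is the exponential moment $\E_x[e^{-\kappa_{\tT_0}}]$ itself, evaluated through a structure you did not invoke.

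The paper's missing idea is to read $\kappa_{\tT_0}=\int_0^\infty L^y_{T^B_0}\,k(dy)$ as the absorption time $T_0^Y$ of the diffusion $Y$ on natural scale with \emph{speed measure $k$} (the same time change as in Section~\ref{sec:construction}), so that $\P_x(\tau_\d^d<\tau_\d^c)=\P_x^Y(T_0^Y>\cE)$. One then splits on whether $Y$ reaches $\varepsilon$: the martingale property gives $\P_x^Y(T_\varepsilon^Y\leq T_0^Y)\leq x/\varepsilon$ (this disposes of your tail piece without even using $\int_0^\infty y\,k(dy)<\infty$ there), while on $\{T_0^Y<T_\varepsilon^Y\}$ only the part of $k$ on $(0,\varepsilon)$ matters, where~\eqref{eq:1/x} allows domination by the measure $Cy^{-1}\,dy$; this reduces the problem to $\P_x(T_0^Z>\cE/C)$ for the diffusion $Z$ with speed measure $y^{-1}\,dy$, i.e.\ the Feller diffusion $dZ_t=\sqrt{Z_t}\,dW_t$. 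The branching property of $Z$ then makes $v(x):=\P_x(T_0^Z\leq\cE/C)$ multiplicative, $v(x+y)=v(x)v(y)$, whence $v(x)=e^{-Ax}$ and
\begin{align*}
  \P_x(\tau_\d^d<\tau_\d^c)\leq 1-e^{-Ax}+\frac{x}{\varepsilon}\leq\Bigl(A+\frac{1}{\varepsilon}\Bigr)x.
\end{align*}
This infinite divisibility in $x$ of $T_0^Z$ --- equivalently, the exact Laplace-transform computation $\E_x[1-e^{-C T_0^Z}]=1-e^{-Ax}$ --- is precisely the interpolation mechanism you were searching for: it shows that the $x\log(1/x)$ mass in the mean sits on events contributing only $O(x)$ to $\E_x[1-e^{-\kappa_{\tT_0}}]$. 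Without this (or an equivalent) ingredient your argument does not yield the stated linear bound.
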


Note that, contrary to Condition~(C'), the previous criterion does not require the killing rate $dk/dm$ of the diffusion process to
be bounded on a neighborhood of $0$. It is actually independent of the measure $m$. For instance, if $m(dx)=dx$ and
$k(dx)=\frac{1}{x}dx$ on a neighborhood of $0$, then the assumptions of Proposition~\ref{prop:diff-kill-C1} are satisfied while
$dk/dm(x)=1/x$ is unbounded near $0$.

\begin{proof}[Proof of Proposition~\ref{prop:diff-kill-C1}]
  We have from~\eqref{eq:kappa_t}
  \begin{align*}
    \P_x(\tau_\d^d<\tau_\d^c)=\P_x\left(\kappa_{\tT_0}>\cE\right) 
    =\P_x\left(\int_0^\infty L_{\sigma_{\tT_0}}^{y} dk(y)>\cE\right)
  \end{align*}
  Hence, by~\eqref{eq:lien-tau-d-brownien},
  \begin{align*}
    \P_x(\tau_\d^d<\tau_\d^c)= \P_x\left(\int_0^\infty L_{T^B_0}^{y} dk(y)>\cE\right)
    =\P_x^Y\left(T_0^Y>\cE\right),
  \end{align*}
  where $Y$ is a diffusion process on $(0,\infty)$ on natural scale with speed measure $k$, absorbed at $0$ and without killing. We have
  \begin{align*}
    \P_x^Y(T_0^Y>\cE)&\leq \P_x^Y(\cE<T_0^Y<T_\varepsilon^Y)+\P_x^Y(T_\varepsilon^Y\leq T_0^Y)\\
    &\leq  \P_x^Y(\cE<T_0^Y<T_\varepsilon^Y)+x/\varepsilon,
  \end{align*}
  since $Y$ is on natural scale and hence is a local martingale. As a consequence, it only remains to prove that
  $\P_x^Y(\cE<T_0^Y<T_\varepsilon^Y)\leq Ax$. Because of our assumption~\eqref{eq:1/x},
  \begin{align*}
    \P_x^Y(\cE<T_0^Y<T_\varepsilon^Y)&=\P_x\left(\left\{T_0^B<T_\varepsilon^B\right\}\cap \left\{\int_0^\infty L_{T^B_0}^{y} dk(y)>\cE\right\}\right)\\
    &\leq \P_x\left(\left\{T_0^B<T_\varepsilon^B\right\}\cap \left\{\int_0^\infty L_{T^B_0}^{y} \frac{dy}{y}>\cE/C\right\}\right)\\
    &\leq\P_x\left(\int_0^\infty L_{T^B_0}^{y} \frac{dy}{y}>\cE/C\right)=1-v(x)
  \end{align*}
  where $v(x)=\P_x^Z(T_0^Z\leq \cE/C)$ with $Z$ the diffusion on $(0,+\infty)$ on natural scale and with speed measure $(1/y) dy$,
  i.e.\ solution to the SDE
  \begin{align*}
    dZ_t=\sqrt{Z_t}dW_t,
  \end{align*} 
  for $(W_t)_{t\geq 0}$ a standard Brownian motion. The diffusion process $(Z_t)_{t\geq 0}$ is a continuous state branching process
  (Feller diffusion), hence by the branching property,
  \begin{align*}
    v(x+y)=v(x)v(y),\ \forall x,y \geq 0.
  \end{align*}
  Since $v(x)$ is non-increasing and $v(0)=1$, we deduce from standard arguments that there exists a constant $A>0$ such that
  \begin{align*}
    v(x)=e^{-Ax}.
  \end{align*}
  Proposition~\ref{prop:diff-kill-C1} follows.
\end{proof}

\begin{prop}
  \label{prop:diff-kill-C2}
  Assume that $\int_1^\infty yk(dy)<\infty$ and $k([0,1])<\infty$. Then there exists a positive constant $A>0$ such that
  \begin{align*}
    \P_x( \tau_\d^d<\tau_\d^c)\leq A x.
  \end{align*}
\end{prop}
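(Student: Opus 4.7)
The plan is to bound $\P_x(\tau_\d^d<\tau_\d^c)$ via the first moment of $\kappa_{\tT_0}$. By~\eqref{eq:def-tau-kappa} and~\eqref{eq:def-absorption-times}, the event $\{\tau_\d^d<\tau_\d^c\}$ equals $\{\tau_\kappa<\tT_0\}=\{\kappa_{\tT_0}>\cE\}$. Conditioning on $B$ and using independence of $\cE$ exactly as in~\eqref{eq:calcul},
\begin{equation*}
  \P_x(\tau_\d^d<\tau_\d^c)=\E_x\!\left[1-e^{-\kappa_{\tT_0}}\right]\leq \E_x[\kappa_{\tT_0}],
\end{equation*}
so it is enough to prove that $\E_x[\kappa_{\tT_0}]\leq A x$ for some constant $A>0$.

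Next I would compute $\E_x[\kappa_{\tT_0}]$ using the identity $\sigma_{\tT_0}=T_0^B$ from~\eqref{eq:lien-tau-d-brownien}, the definition~\eqref{eq:kappa_t}, and Fubini:
\begin{equation*}
  \E_x[\kappa_{\tT_0}]=\E_x\!\left[\int_0^\infty L^y_{T^B_0}\,k(dy)\right]=\int_0^\infty \E_x\!\left[L^y_{T^B_0}\right]k(dy).
\end{equation*}
The Green function for a Brownian motion on $[0,\infty)$ absorbed at $0$ is the classical $G(x,y)=2(x\wedge y)$, and with the normalization of local time consistent with the occupation time formula used in~\cite{freedman-83} one has $\E_x[L^y_{T^B_0}]=2(x\wedge y)$. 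Therefore
\begin{equation*}
  \E_x[\kappa_{\tT_0}]=2\int_0^\infty (x\wedge y)\,k(dy)=2\int_{(0,x]} y\,k(dy)+2x\,k((x,\infty)).
\end{equation*}

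It then remains to bound the right-hand side by $Ax$ using the two integrability assumptions. Set $M:=k((0,1])+\int_1^\infty y\,k(dy)<\infty$. For $x\leq 1$, one has $\int_{(0,x]}y\,k(dy)\leq x\,k((0,1])\leq Mx$ and $x\,k((x,\infty))\leq x\bigl(k((0,1])+k([1,\infty))\bigr)\leq x\bigl(k((0,1])+\int_1^\infty y\,k(dy)\bigr)\leq Mx$. For $x>1$, $\int_{(0,x]}y\,k(dy)\leq k((0,1])+\int_1^\infty y\,k(dy)=M\leq Mx$ and $x\,k((x,\infty))\leq x\int_1^\infty y\,k(dy)\leq Mx$. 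Combining these bounds yields $\E_x[\kappa_{\tT_0}]\leq 4Mx$, which gives the desired estimate with $A=4M$.

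The steps are essentially routine; the only subtle point is the identification $\E_x[L^y_{T^B_0}]=2(x\wedge y)$, which is standard but relies on the precise normalization of local time adopted in Section~\ref{sec:construction}. Alternatively, as in the proof of Proposition~\ref{prop:diff-kill-C1}, one can interpret $\kappa_{\tT_0}$ as the absorption time $T_0^Y$ of the natural-scale diffusion $Y$ with speed measure $k$ started from $x$, for which the classical formula $\E_x[T_0^Y]=2\int_0^\infty(x\wedge y)\,k(dy)$ yields the same expression and bypasses any local time normalization issue.
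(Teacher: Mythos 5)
Your proof is correct and is essentially the paper's own argument: the paper bounds the complementary event via $\P_x(\tau_\d^c<\tau_\d^d)=\E_x\bigl[e^{-\kappa_{\tT_0}}\bigr]\geq 1-\E_x[\kappa_{\tT_0}]$, identifies $\kappa_{\tT_0}$ with the absorption time $T_0^Y$ of the natural-scale diffusion $Y$ with speed measure $k$, and applies the Green formula $\E_x[T_0^Y]=\int_0^\infty(x\wedge y)\,k(dy)\leq x\,k([0,1])+x\int_1^\infty y\,k(dy)$ --- the same chain as yours, up to the harmless local-time normalization constant you flag (which only changes $A$) and your slightly more detailed case analysis in the final linear bound.
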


We emphasize that measures $k$ which are not absolutely continuous w.r.t.\ the Lebesgue measure are of course also covered by
Prop.~\ref{prop:diff-kill-C2}. Note also that Prop.~\ref{prop:diff-kill-C2} does not imply Prop.~\ref{prop:diff-kill-C1} since the
killing measure $dk(x)=(1/x)d\Lambda(x)\mathbbm{1}_{x\in (0,1)}$ is not finite.

\begin{proof}[Proof of Proposition~\ref{prop:diff-kill-C2}]
  As in the beginning of the proof of Proposition~\ref{prop:diff-kill-C1}, we obtain
  \begin{align*}
    \P_x(\tau_\d^c<\tau_\d^d)&= \P_x\left(\int_0^\infty L_{T^B_0}^{y} dk(y)<\cE\right)\\
    &=\E_x\left[\exp\left(-\int_0^\infty L_{T^B_0}^{y} dk(y)\right)\right]\\
    &\geq 1-\E_x\left(\int_0^\infty L_{T^B_0}^{y} dk(y)\right)\\
    &=1-\E_x^Y(T_0\mid Y_0=x),
  \end{align*}
  where $\E^Y$ denotes the expectation w.r.t. the law of a diffusion process $Y$ on $(0,+\infty)$ on natural scale, without killing
  and whose speed measure is $k(dy)$. Finally, using classical Green formula for diffusion processes without killing 
  applied to $Y$, we conclude that
  \begin{equation*}
    \P_x(\tau_\d^c<\tau_\d^d)\geq 1-\int_0^\infty (x\wedge y)\,k(dy)\geq 1-xk([0,1])-x\int_1^\infty y\,k(dy)\geq 1-Cx. \qedhere
  \end{equation*}
\end{proof}

\section{Examples and comparison with existing results}
\label{sec:ex-bib}

\subsection{On general diffusions}
\label{sec:ex1}

Let us first recall that our results also cover the case of general (i.e.\ not necessarily on natural scale) killed diffusion
processes $(Y_t,t\geq 0)$ on $[0,\infty)$ with speed measure $m_Y$ and killing measure $k_Y$, such that the diffusion without killing
$\widetilde{Y}$ hits 0 in a.s.\ finite time. Under these assumptions, there exists a continuous and strictly increasing scale
function $s:[0,\infty)\rightarrow[0,\infty)$ of the process $Y$ such that $s(0)=0$ and $s(\infty)=\infty$, and our results apply to
the process $X_t=s(Y_t)$ on natural scale, whose speed measure and killing measure are respectively given by
\begin{align}
  \label{eq:k_X-m_X}
  m_X(dx)=m_Y*s(dx)\quad\text{and}\quad k_X(dx)=k_Y*s(dx),
\end{align}
where $m_Y*s$ and $k_Y*s$ denote the pushforward measures of $m_Y$ and $k_Y$ through the function $s$ (the formula for $m_X$ can be
found in~\cite[Thm.\,VII.3.6]{Revuz1999} and for the measure $k_X$, one may use for example~\eqref{eq:kappa_t} and the fact that
$L^{y,\widetilde{Y}}_t=L^{s(y),\tX}_t$ which follows from the occupation time formula of~\cite[p.\,160]{freedman-83}).

Hence $X$ satisfies (C) or (C') if and only if $Y$ satisfies the following Conditions~(D) or (D') respectively. In particular, the
conclusions of Theorem~\ref{thm:QSD_full_with_killing} apply to $Y$ if (D) or (D') is fulfilled.

\bigskip\noindent\textbf{Condition (D)} Assume that $\int_0^\infty s(y)\,(m_Y(dy)+k_Y(dy))<\infty$. Assume also that there exist two
constants $t_1,A>0$ such that
\begin{equation*}
  \PP(t_1<\tT_0\mid Y_0=y)\leq As(y),\quad\forall y>0
\end{equation*}
and
\begin{align*}
  \P(\tau_\d^{d}<\tau_\d^{c}\mid Y_0=y)\leq As(y) ,\quad\forall y>0,
\end{align*}
where the absorption times $\tT_0$, $\tau_\d^{d}$ and $\tau_\d^c$ are constructed here from the processes $Y$ and $\widetilde{Y}$.
\bigskip

\bigskip\noindent\textbf{Condition (D')} Assume that $\int_0^\infty s(y)\,(m_Y(dy)+k_Y(dy))<\infty$. Assume also that there exist
three constants $t_1,A,\varepsilon>0$ such that
\begin{equation*}
  \PP(t_1<\tT_0\mid Y_0=y)\leq As(y),\quad\forall y>0
\end{equation*}
and $k_Y$ is absolutely continuous w.r.t. $m_Y$ on $(0,\varepsilon)$ and satisfies
\begin{align*}
  \frac{d k_Y}{d m_Y}(y)\leq A, \quad \forall y\in(0,\varepsilon).
\end{align*}

In particular, we deduce from~\eqref{eq:k_X-m_X} the following criteria for (D) and (D').
\begin{prop}
  \label{prop:(D)-(D')}
  Assume that $\int_0^\infty s(y)\,(m_Y(dy)+k_Y(dy))<\infty$ and that there exist two constants $t_1,A>0$ such that
  \begin{equation*}
    \PP(t_1<\tT_0\mid Y_0=y)\leq As(y),\quad\forall y>0.
  \end{equation*}
  Then
  \begin{description}
  \item[\textmd{(i)}] If $k_Y([0,1])<\infty$, then $Y$ satisfies (D).
  \item[\textmd{(ii)}] If there exists $\varepsilon,\rho,C>0$ such that 
    \begin{align*}
      \frac{dk_Y}{d\Lambda}(y)\leq \frac{C}{s(y)},\ \forall y\in (0,\varepsilon)
    \end{align*}
    and
    \begin{equation}
      \label{eq:hyp-ds}
      s(y)-s(x)\geq \rho(y-x),\quad\forall 0<x<y<\varepsilon,
    \end{equation}
    then $Y$ satisfies (D).
  \item[\textmd{(iii)}] If the killing rate $\kappa_Y(y):=\frac{dk_Y}{dm_Y}(y)$ exists and is uniformly bounded on
    $(0,\varepsilon)$ for some $\varepsilon>0$, then $Y$ satisfies (D').
  \end{description}
\end{prop}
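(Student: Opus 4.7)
The plan is to reduce each of the three cases to applying one of the previously established results (C)/(C') to the natural scale process $X_t = s(Y_t)$, for which the speed and killing measures are $m_X = m_Y * s$ and $k_X = k_Y * s$. The first hypothesis of each of (C) and (C') for $X$ — namely $\int_0^\infty x\,(m_X(dx)+k_X(dx))<\infty$ and $\PP_x(t_1<\tT_0^X) \leq A x$ — follows immediately from the standing assumptions: for the integral, apply the change of variable formula for pushforward measures; for the hitting time bound, observe that $s(0)=0$ and $s$ is strictly increasing so $\tT_0^X = \tT_0$, and set $y = s^{-1}(x)$ so that $As(y) = Ax$. Thus the work reduces to verifying the behavior of $k_X$ near $0$.

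For case (i), I would apply Proposition \ref{prop:diff-kill-C2} to $X$. I need $\int_1^\infty x\, k_X(dx)<\infty$, which follows from $\int_0^\infty s(y)\,k_Y(dy)<\infty$, and $k_X([0,1]) = k_Y([0,s^{-1}(1)]) < \infty$, which follows from the assumption $k_Y([0,1])<\infty$ together with the local finiteness of $k_Y$ on $(0,\infty)$. Proposition \ref{prop:diff-kill-C2} then gives $\P_x(\tau_\d^d<\tau_\d^c)\leq Ax$, which is the last missing condition in (C) for $X$, equivalent to (D) for $Y$.

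For case (ii), I would apply Proposition \ref{prop:diff-kill-C1} to $X$. The integrability of $y\mapsto s(y)\,k_Y(dy)$ gives the corresponding integrability of $x\,k_X(dx)$. For the $C/x$ bound on $dk_X/d\Lambda$, the Lipschitz-from-below condition $s(y)-s(x)\geq\rho(y-x)$ on $(0,\varepsilon)$ means that $s^{-1}$ has derivative at most $1/\rho$ on $(0,s(\varepsilon))$, so the density of $k_X$ with respect to Lebesgue measure on $(0,s(\varepsilon))$ is
\begin{equation*}
\frac{dk_X}{d\Lambda}(x) \;=\; \frac{dk_Y}{d\Lambda}(s^{-1}(x))\,(s^{-1})'(x) \;\leq\; \frac{C}{s(s^{-1}(x))}\cdot\frac{1}{\rho} \;=\; \frac{C/\rho}{x}.
\end{equation*}
Proposition \ref{prop:diff-kill-C1} then delivers the required bound on $\P_x(\tau_\d^d<\tau_\d^c)$, and (D) follows.

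For case (iii), the observation is that pushforward by $s$ preserves Radon--Nikodym derivatives: writing $k_Y = \kappa_Y\, m_Y$ on $(0,\varepsilon)$ and using the change of variable $x = s(y)$,
\begin{equation*}
k_X(A) \;=\; k_Y(s^{-1}(A)) \;=\; \int_{s^{-1}(A)}\kappa_Y(y)\,m_Y(dy) \;=\; \int_A \kappa_Y(s^{-1}(x))\,m_X(dx)
\end{equation*}
for every Borel $A\subset (0,s(\varepsilon))$. Hence $dk_X/dm_X = \kappa_Y\circ s^{-1}$ is bounded by the same constant on $(0,s(\varepsilon))$, which is exactly the bound \eqref{eq:C'} required by Condition (C'), so (D') holds. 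The only mildly delicate step in the whole argument is the density computation in (ii), since $s$ is a priori only continuous and strictly increasing; the assumption $s(y)-s(x)\geq\rho(y-x)$ is precisely what is needed to make $s^{-1}$ Lipschitz and justify the change of variable in Radon--Nikodym form.
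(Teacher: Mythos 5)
Your proof is correct and follows essentially the same route as the paper: transfer everything to the natural-scale process $X=s(Y)$ via the pushforward formula~\eqref{eq:k_X-m_X}, then apply Proposition~\ref{prop:diff-kill-C2} for (i), Proposition~\ref{prop:diff-kill-C1} for (ii), and the invariance of the Radon--Nikodym derivative $dk_X/dm_X=\kappa_Y\circ s^{-1}$ for (iii). The only (cosmetic) difference is in (ii), where the paper chains Radon--Nikodym derivatives through the intermediate measure $\Lambda*s$, using~\eqref{eq:hyp-ds} to bound $d(\Lambda*s)/d\Lambda$, whereas you phrase the same computation through the a.e.\ derivative $(s^{-1})'\leq 1/\rho$ of the Lipschitz inverse --- and you correctly identify that~\eqref{eq:hyp-ds} is exactly what justifies this change of variables.
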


\begin{proof}
  Point~(i) follows immediately from Prop.~\ref{prop:diff-kill-C2} since $k_X([0,s(1)])=k_Y([0,1])<\infty$.

  For Point~(ii), we deduce from~\eqref{eq:k_X-m_X} that
  \begin{equation*}
    \frac{dk_X}{d(\Lambda*s)}\leq\frac{C}{x},\ \forall x\in (0,s(\varepsilon)).
  \end{equation*}
  In view of of Prop.~\ref{prop:diff-kill-C1}, it only remains to prove that $d(\Lambda*s)/d\Lambda\leq C'$ in the neighborhood of 0.
  This is exactly~\eqref{eq:hyp-ds}.

  Finaly, under the conditions of Point~(iii), by~\eqref{eq:k_X-m_X}, $\kappa_X(y):=\frac{dk_X}{dm_X}(x)$ exists and is uniformly
  bounded on $(0,s(\varepsilon))$. Hence $X$ satisfies (C').
\end{proof}

Let us also mention that, as will appear clearly in the proofs of Section~\ref{sec:proof-results-kill}, our methods can be easily
extended to diffusion processes on a bounded interval, where one of the boundary point is an entrance boundary and the other is exit
or regular, and also to cases where both boundary points are either exit or regular.

We provide now a few examples of diffusion processes that enter our setting (directly or considering the above extension).

\bigskip\noindent \textbf{Example 1.} Let $X$ be a sticky Brownian motion on the interval $(-1,1)$ which is sticky at $0$, absorbed
at $-1$ and $1$, and with measurable killing rate $\kappa:(-1,1)\mapsto (0,+\infty)$ such that $\int_{(-1,1)} \kappa(x) dx<\infty$
and $\kappa(0)<\infty$. In this case $X$ is the diffusion with killing on natural scale with speed measure and killing measure
\begin{align*}
  m(dx)=\Lambda(dx)+\delta_0(dx)\quad\text{ and }\quad k(dx)=\kappa(x)\Lambda(dx)+\kappa(0)\delta_0(dx).
\end{align*}
Extending our result to diffusion processes on $(-1,1)$ with regular boundaries, we immediately deduce from
Proposition~\ref{prop:diff-kill-C2} that $X$ admits a unique quasi-stationary distribution and
satisfies~\eqref{eq:expo-cv-thm-with-killing}.

\bigskip\noindent \textbf{Example 2.} Let $Y$ be a solution to the SDE
\begin{align*}
  dY_t=\sqrt{Y_t}d B_t+Y_t (r-cY_t)dt,
\end{align*}
where $r\in\RR$ represents the individual growth rate without competition and $c>0$ governs the competitive density dependence in the
growth rate. We assume that the diffusion process $Y$ is subject to the killing rate $\kappa(y)=\sin(1/y)\vee \sqrt{y}$. Then $Y$
admits a unique quasi-stationary distribution and satisfies~\eqref{eq:expo-cv-thm-with-killing} (see Proposition~\ref{prop:logistic}
for a detailed study of this case in a more general setting).

In this example, $0$ is an exit boundary and $\kappa$ is not continuous at $0$ (see the comparison with the literature in
Section~\ref{sec:sturm-liouville}).

\bigskip\noindent \textbf{Example 3.} Let $Y$ be a solution to the SDE
\begin{align*}
  dY_t=dB_t-Y_t^2 dt,
\end{align*}
absorbed at $0$. We may for example assume that the diffusion process $Y$ is subject to the non-negative killing rate
$\kappa(y)=\frac{1}{\sqrt{y}}\vee \sqrt{y}$, or to the killing measure
\begin{align*}
  k(dy)=\sum_{n\in\N} b_n\delta_{a_n}(dy),
\end{align*}
for any bounded sequence $(a_n)_{n\in\N}$ and summable family $(b_n)_{n\in\N}$. In both cases, condition (D) is satisfied.

These two examples illustrate that our results also cover cases of unbounded (and even singular) killing rate $\kappa$.

\bigskip\noindent \textbf{Example 4.} In addition, our method is general enough to apply to more complex processes. An example of
diffusion without killing but with jumps is studied in~\cite[Sec.\,3.5.4]{champagnat-villemonais-15b}. The extension to the case of
diffusions with killing is straightforward using the same method: we consider a diffusion process on natural scale $(X_t,t\geq 0)$ on
$[0,\infty)$ with speed measure $m$ and killing measure $k$ such that either Condition~(C) or~(C') is satisfied. Let us denote by
$\mathcal{L}$ the infinitesimal generator of $X$.

Our first example is the Markov process $(\widehat{X}_t,t\geq 0)$ with infinitesimal generator 
$$
\widehat{\mathcal{L}}f(x)=\mathcal{L}f(x)+(f(x+1)-f(x))\mathbbm{1}_{x\geq 1},
$$
for all $f$ in the domain of $\mathcal{L}$. In other words, we consider a c\`adl\`ag process following a diffusion process with speed
measure $m$ and killing measure $k$ between jump times, which occur at the jump times of an independent Poisson process $(N_t,t\geq
0)$ of rate $1$, with jump size $+1$ if the process is above 1, and $0$ otherwise. Then, a straightforward adaptation of Prop.\ 3.10
of~\cite{champagnat-villemonais-15b} implies that the conclusion of Thm.~\ref{thm:QSD_full_with_killing} holds for $\widehat{X}$.

Another simple example is given by the Markov process $(\widehat{X}_t,t\geq 0)$ with infinitesimal generator 
$$
\widehat{\mathcal{L}}f(x)=\mathcal{L}f(x)+f(x+1)-f(x),
$$
for all $f$ in the domain of $\mathcal{L}$. Here, $+1$ jumps occur at Poisson times, regardless of the position of the process. Then,
in the case where $(m,k)$ satisfies Condition~(C) and if $\frac{dm}{d\Lambda}(x)\leq\frac{C}{x}$ in the neighborhood of $0$, the
conclusion of Thm.~\ref{thm:QSD_full_with_killing} holds for $\widehat{X}$.

To prove this, the only new difficulty compared with the previous case is to check that the probability that the process jumps before
being killed or hitting zero is smaller than $Cx$ when $\widehat{X}_0=x$. By Condition~(C), this is equivalent to prove the same
property for the process without killing. Since the first jump time is exponential of parameter 1, this can be proved with the exact
same argument as in Prop.~\ref{prop:diff-kill-C1} since $\tau_\d^c=\int_0^\infty L^y_{T^B_0}dm(y)$.

Of course, many easy extensions are possible, for example with jumps from $x$ to $g(x)$ at constant rate, where $g$ is a
non-decreasing function and might be $0$ on some interval. In fact, even random jumps can be easily covered provided monotonicity
properties of jump measures. This also covers situations with bounded non-decreasing jump rate. The case of general rates is more
complicated but could also be attacked with our method.

\subsection{Application to models of population dynamics}
\label{sec:popu-dyn}

Quasi-stationary distributions for one-dimensional diffusion processes have attracted much interest as an application to population
dynamics. In~\cite{CCLMMS09,cattiaux-meleard-10}, the authors consider the conditional behavior of diffusion processes conditioned
not to hit $0$. In~\cite{HeningKolb2014}, the authors consider one-dimensional diffusion processes that cannot hit $0$ continuously
($0$ is a natural boundary) but are subject to a killing rate modeling the risk of catastrophic events. In this section, our aim is
to bring together both points of view by considering logistic Feller diffusion processes and their extensions to general drifts, as
in~\cite{CCLMMS09}, with an addition killing rate.

More precisely, let us consider a diffusion process $Y$ on $[0,+\infty)$ solution to the SDE
\begin{align*}
  dY_t=\sqrt{Y_t}d B_t+Y_t h(Y_t)dt,
\end{align*}
where $h$ is a measurable function from $\RR_+\rightarrow\RR$ modeling the density dependence of the individual growth rate of the
population. We assume that $Y$ is subject to the nonnegative killing rate $\kappa=\frac{dk_Y}{dm_Y}\in
L^1_{\text{loc}}((0,+\infty))$. For such models, the conditions~(D) and~(D') directly give criteria for exponential convergence to
the quasi-distribution. The next result gives explicit conditions on $\kappa$ for a large class of functions $h$. Other conditions on
$\kappa$ could be obtained for other asymptotic behaviors of $h$.

\begin{prop}
  \label{prop:logistic}
  Assume that $h\in L^1_{\text{loc}}([0,+\infty))$ and that there exists $C,\beta>0$ such that, for some $y_0>0$,
  \begin{equation*}
    h(y)\leq -Cy^\beta, \quad\forall y\geq y_0.
  \end{equation*}
  If $\kappa$ satisfies
  \begin{equation*}
    \int_1^\infty\frac{\kappa(y)}{y^{1+\beta}}dy<\infty
  \end{equation*}
  and
  \begin{equation*}
    \text{either}\quad \int_0^1 \frac{\kappa(y)}{y}dy<\infty\quad\text{or}\quad \limsup_{x\rightarrow 0}\kappa(x)<\infty,
  \end{equation*}
  then there exists a unique probability measure $\alpha$ on $(0,+\infty)$ and two constants $C,\gamma>0$ such that, for all initial
  distribution $\mu$ on $[0,\infty)$ with $\mu(\{0\})<1$,
  \begin{align*}
    \left\|\PP_\mu(X_t\in\cdot\mid t<\tau_\partial)-\alpha(\cdot)\right\|_{TV}\leq C e^{-\gamma t},\ \forall t\geq 0.
  \end{align*}
\end{prop}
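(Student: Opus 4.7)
The strategy is to apply Proposition~\ref{prop:(D)-(D')} to $Y$ and then invoke the reduction to natural scale of Section~\ref{sec:ex1} together with Theorem~\ref{thm:QSD_full_with_killing}. The key preliminary is to make the scale function, speed measure and killing measure explicit: the generator of $Y$ without killing being $\frac{y}{2}\partial_y^2 + y h(y)\partial_y$,
$$ s'(y) = \exp\Bigl(-2\int_1^y h(z)\,dz\Bigr), \quad m_Y(dy) = \frac{2}{y\, s'(y)}\, dy, \quad k_Y(dy) = \kappa(y)\, m_Y(dy). $$
Since $h \in L^1_{\mathrm{loc}}([0,\infty))$, $s'(0+)$ exists in $(0,\infty)$, so $s(y) \sim c_0 y$ and $m_Y(dy) \sim (2/c_0 y)\, dy$ as $y \to 0$.

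First I would check $\int_0^\infty s(y)(m_Y(dy) + k_Y(dy)) < \infty$. Near $0$, $s(y)m_Y(dy)$ is comparable to $dy$ and $s(y)k_Y(dy)$ to $\kappa(y)\,dy$, which is integrable in both alternatives on $\kappa$ (since $\int_0^1 \kappa(y)/y\,dy < \infty$ implies $\int_0^1 \kappa(y)\,dy < \infty$). At infinity, the hypothesis $h(y) \leq -C y^\beta$ yields $s'(y) \geq e^{\lambda y^{\beta+1}}$ for $y$ large, and since $s'$ is eventually increasing an integration by parts delivers the Laplace-type comparison $s(y) \leq C' s'(y)/y^\beta$; hence $s(y)m_Y(dy) = O(y^{-\beta-1})\,dy$ and $s(y)k_Y(dy) = O(\kappa(y)\,y^{-\beta-1})\,dy$, both integrable by assumption.

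Next I would verify the bound $\P(t_1 < \tT_0 \mid Y_0 = y) \leq A s(y)$ via Theorem~\ref{thm:h-transfo-3} applied to the natural-scale process $X := s(Y)$, whose speed measure is $m_X = m_Y * s$. Because $s$ is bi-Lipschitz near $0$, $m_X$ has density of order $1/x$ w.r.t.\ Lebesgue there, so $\int_{(0,y)} z^2\, m_X(dz) = O(y^2)$ and the integral in~\eqref{eq:matsumoto} is finite. This supplies the common half of conditions (D) and (D'). Finally, the dichotomy on $\kappa$ is handled as follows. If $\limsup_{x\to 0}\kappa(x) < \infty$, then $\kappa = dk_Y/dm_Y$ is bounded near $0$, so Proposition~\ref{prop:(D)-(D')}(iii) gives (D'). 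If instead $\int_0^1 \kappa(y)/y\,dy < \infty$, then
$$ k_Y([0,1]) = \int_0^1 \kappa(y)\frac{2}{y\, s'(y)}\, dy \asymp \int_0^1 \frac{\kappa(y)}{y}\, dy < \infty, $$
so Proposition~\ref{prop:(D)-(D')}(i) gives (D). In either case Theorem~\ref{thm:QSD_full_with_killing} applied to $X$ transfers back to $Y$ via the scale change and yields the announced exponential convergence.

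The main obstacle I anticipate is the asymptotic estimate $s(y) \lesssim s'(y)/y^\beta$ at infinity: only a one-sided upper bound on $h$ is available, so one cannot use matching lower and upper bounds on $s'$. The argument must therefore rely on the eventual monotonicity of $s'$ (which follows from $h(y) < 0$ for $y$ large) combined with an integration by parts, and this is the step where care is needed to make the hypothesis $\int_1^\infty \kappa(y)/y^{1+\beta}\,dy < \infty$ both necessary and sufficient for the finite-mass condition on $s\,dk_Y$.
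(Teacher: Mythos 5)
Your proposal is correct and follows essentially the same route as the paper's proof: explicit computation of $s$, $m_Y$, $k_Y$, verification of $\int_0^\infty s(y)(m_Y(dy)+k_Y(dy))<\infty$ via a Laplace-type tail estimate (your bound $s(y)\leq C's'(y)/y^\beta$ is exactly the paper's estimate $\int_0^y e^{2\int_u^y h(z)\,dz}\,du\leq C''/y^\beta$ in different notation), then Theorem~\ref{thm:h-transfo-3} applied to $X=s(Y)$ using the $O(1/x)$ density of $m_X$ near $0$, and finally the same dichotomy through Points~(i) and~(iii) of Proposition~\ref{prop:(D)-(D')}. The only slip is your closing remark about the hypothesis $\int_1^\infty \kappa(y)/y^{1+\beta}\,dy<\infty$ being \emph{necessary}: neither the statement nor the proof requires necessity, only sufficiency.
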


\begin{proof}
  In this case, the scale function and the speed measure associated with $Y$ are given by
  \begin{align*}
    s(y)=\int_0^y e^{-2\int_0^u h(z) dz} du \quad\text{and}\quad m_Y(dy)=\frac{e^{2\int_0^y h(u)du}}{y}dy.
  \end{align*}
  The killing measure associated to $Y$ is $k_Y=\kappa(y)m(dy)=\frac{\kappa(y)e^{2\int_0^y h(u)du}}{y}dy$. Our aim is to prove that
  condition (D) or (D') holds true using Proposition~\ref{prop:(D)-(D')}.

  To do so, let us first check that
  \begin{align}
    \label{eq:s-m+k-integrable}
    \int_{(0,+\infty)} s(y) (m_Y(dy)+k_Y(dy))<\infty.
  \end{align}
  We have
  \begin{align*}
    \int_{(0,+\infty)} s(y) (m_Y(dy)+k_Y(dy))&=\int_{(0,+\infty)} \int_0^y e^{-2\int_0^u h(z) dz} du \frac{(1+\kappa(y))e^{2\int_0^y h(u)du}}{y}dy.
  \end{align*}
  Since $h\in L^1_{\text{loc}}([0,+\infty))$, we have
  \begin{align*}
    \int_0^y e^{-2\int_0^u h(z) dz} du \frac{(1+\kappa(y))e^{2\int_0^y h(u)du}}{y}\sim_{y\rightarrow 0} 1+\kappa(y),
  \end{align*}
  which is integrable in any bounded neighborhood of $0$ by assumption. As a consequence,
  \begin{align*}
    \int_{(0,y_0)} s(y) (m_Y(dy)+k_Y(dy))<\infty.
  \end{align*}
  On the other hand, 
  \begin{align*}
    e^{2\int_0^y h(u)du}\,\int_0^y e^{-2\int_0^u h(z) dz} du&=\int_0^y e^{2\int_u^y h(z) dz} du \\
    &\leq \int_{0}^{y_0} e^{2\int_u^y h(z) dz} du+\int_{y_0}^y e^{-2C/(1+\beta)(y^{1+\beta}-u^{1+\beta})} du.
  \end{align*}
  We have, for all $y\geq y_0$,
  \begin{align*}
    \int_{0}^{y_0} e^{2\int_u^y h(z) dz} du\leq \int_{0}^{y_0} e^{2\int_u^{y_0} h(z) dz} du\,e^{-2C/(1+\beta)
      (y^{1+\beta}-y_0^{1+\beta})}=C' e^{-2Cy^{1+\beta}/(1+\beta)}
  \end{align*}
  for some finite constant $C'$. Moreover, setting $C_\beta=\sup_{x\in (0,1)} (1-x)/(1-x^{1+\beta})<\infty$, we have
  \begin{align*}
    \int_{y_0}^y e^{-2C/(1+\beta)(y^{1+\beta}-z^{1+\beta}) dz} du&\leq \int_{y_0}^y e^{-2C/(1+\beta)C^{-1}_\beta(y-z)y^\beta dz} du\\
    &\leq \frac{(1+\beta)C_\beta}{2C y^\beta}.
  \end{align*}
  As a consequence, there exists $C''>0$ such that
  \begin{align*}
    \int_{(y_0,+\infty)} s(y) (m_Y(dy)+k_Y(dy))&\leq C'' \int_{(y_0,+\infty)} \frac{1+\kappa(y)}{y^{1+\beta}} dy,
  \end{align*}
  which is finite by assumption. Finally, we conclude that~\eqref{eq:s-m+k-integrable} holds.
  
  The diffusion process $X=s(Y)$ is on natural scale with speed measure $m_X=m_Y*s$, which is dominated by $(2/x)\,d\Lambda(x)$ in the
  neighborhood of $0$, since $s(y)\sim_{0+} y$ and $s'(y)\rightarrow_{0+}1$. We deduce from Thm.~\ref{thm:h-transfo-3} that $X$
  satisfies~\eqref{eq:hyp-B-prime} and hence that $Y$ satisfies
  \begin{equation*}
    \PP(t_1<\tT_0\mid Y_0=y)\leq As(y),\quad\forall y>0.
  \end{equation*}

  On the one hand, if $\int_0^1 \frac{\kappa(y)}{y}dy<\infty$ then $\int_0^1 k_Y(dy)<\infty$, which allows us to conclude that (D)
  holds using Point (i) of Proposition~\ref{prop:(D)-(D')}. On the other hand, if $\limsup_{x\rightarrow 0}\kappa(x)<\infty$, then
  Point (iii) of the same proposition holds true and hence (D') holds.
\end{proof}

\subsection{On processes solutions to stochastic differential equations and comparison with the literature}
\label{sec:sturm-liouville}

In the case where the speed measure $m$ is absolutely continuous w.r.t.\ the Lebesgue measure on $(0,\infty)$, our diffusion
processes on natural scale are solutions, before killing, to SDEs of the form
\begin{equation}
  \label{eq:nos-EDS}
  dX_t=\sigma(X_t)dB_t,  
\end{equation}
where $\sigma$ is a measurable function from $(0,\infty)$ to itself such that the speed measure $m(dx)=\frac{1}{\sigma^2(x)}\,dx$ is
locally finite on $(0,\infty)$. Following the scale function trick of Section~\ref{sec:ex1}, our results actually cover all SDEs of
the form
\begin{equation*}
  dY_t=\sigma(Y_t)dB_t+b(Y_t)dt  
\end{equation*}
such that $b/\sigma^2\in L^1_{\text{loc}}((0,\infty))$ (see Chapter 23 of~\cite{kallenberg-02}).

Existence of quasi-stationary distribution of diffusion processes with killing has been already studied
in~\cite{Steinsaltz2007,kolb-steinsaltz-12}. These works are based on a careful study of the generator of the process and
Sturm-Liouville methods. Their results cover the case of entrance or natural boundary at $\infty$, but only the case of regular
boundary at 0. Our results only cover the case of entrance boundary at $\infty$, but $0$ can be either natural or exit. In the case
where $\infty$ is an entrance boundary, the stronger previously known result is~\cite[Thm.\,4.13]{kolb-steinsaltz-12} for diffusion
processes $(Y_t,t\geq 0)$ on $[0,\infty)$ which are solution to SDEs of the form
\begin{equation}
  \label{eq:EDS}
  dY_t=dB_t-h(Y_t)dt,
\end{equation}
with killing measure absolutely continuous w.r.t.\ the Lebesgue measure with continuous density on $[0,\infty)$. Let us recall that the
scale function of the solution $(Y_t,t\geq 0)$ to~\eqref{eq:EDS} is given by
$$
s(x)=\int_0^x\exp\left(2\int_1^yh(z)\,dz\right)\,dy,
$$
and that $X_t=s(Y_t)$ is solution to the SDE
\begin{equation}
  \label{eq:diffusion-ex}
  dX_t=\exp\left(2\int_1^{s^{-1}(X_t)}h(z)\,dz\right)\,dB_t.
\end{equation}
In particular, this diffusion with killing is not as general as~\eqref{eq:nos-EDS} since it only covers the case where the diffusion
coefficient is positive and continuous. In fact, in all the references previously cited, the drift $h$ is assumed at least continuous
so that the diffusion coefficient in~\eqref{eq:diffusion-ex} is at least $C^1$.

In addition, our criteria allow unbounded killing rates at $0$ as shown in the next proposition, contrary to existing results. This
result can be proved similarly as Proposition~\ref{prop:logistic}.

\begin{prop}
  \label{prop:brownien-drifte}
  Assume that $h\in L^1_{\text{loc}}([0,+\infty))$ and that there exists $C>0$ and $\beta>1$ such that, for some $y_0>0$,
  \begin{equation*}
    h(y)\leq -Cy^\beta, \quad\forall y\geq y_0.
  \end{equation*}
  If $\kappa$ satisfies
  \begin{equation*}
    \int_0^1 \kappa(y)dy<\infty  \quad    \text{and}\quad   \int_1^\infty\frac{\kappa(y)}{y^{\beta}}dy<\infty,
  \end{equation*}
  then (D) is satisfied.
\end{prop}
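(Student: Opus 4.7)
The plan is to verify condition (D) using Proposition~\ref{prop:(D)-(D')}(i), following very closely the pattern of the proof of Proposition~\ref{prop:logistic}. The starting point is to write down explicitly the scale function, speed measure and killing measure of $Y$: the generator $\frac{1}{2}f''-hf'$ yields $s'(y)=\exp(2\int_1^y h(z)\,dz)$, and hence $m_Y(dy)=2\exp(-2\int_1^y h(z)\,dz)\,dy$ and $k_Y(dy)=2\kappa(y)\exp(-2\int_1^y h(z)\,dz)\,dy$. Because $h\in L^1_{\text{loc}}([0,\infty))$, both $s(y)$ and the densities of $m_Y$ and $k_Y$ are bounded above and below by positive constants near $0$; in particular $s(y)\sim y$ as $y\to 0^+$.

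The first substantive step is to check
\begin{equation*}
  \int_{(0,+\infty)} s(y)\,(m_Y(dy)+k_Y(dy))<\infty.
\end{equation*}
Near $0$ this reduces to integrability of $y(1+\kappa(y))$, which follows from $\int_0^1\kappa(y)\,dy<\infty$. Near $+\infty$, I would write $s(y)\,m_Y(dy)=2\bigl(\int_0^y e^{2\int_u^y h(z)\,dz}\,du\bigr)\,dy$ and split the inner integral over $(0,y_0)$ and $(y_0,y)$. The contribution from $(0,y_0)$ decays like $\exp(-\tfrac{2C}{\beta+1}y^{\beta+1})$ using the upper bound $h(z)\leq -Cz^\beta$ on $[y_0,y]$. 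For the piece on $(y_0,y)$, I would combine the same bound with the convexity estimate $y^{\beta+1}-u^{\beta+1}\geq C_\beta^{-1}(y-u)y^\beta$ (with $C_\beta=\sup_{x\in(0,1)}(1-x)/(1-x^{\beta+1})<\infty$), exactly as in the proof of Proposition~\ref{prop:logistic}, to obtain $\int_{y_0}^y e^{2\int_u^y h}\,du=O(y^{-\beta})$ as $y\to\infty$. Therefore $s(y)\,m_Y(dy)=O(y^{-\beta})\,dy$ at infinity, and $s(y)\,k_Y(dy)=O(\kappa(y)\,y^{-\beta})\,dy$; finiteness then follows from $\beta>1$ and $\int_1^\infty \kappa(y)/y^\beta\,dy<\infty$.

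Next, to obtain $\PP(t_1<\tT_0\mid Y_0=y)\leq A\,s(y)$ I pass to the process $X=s(Y)$ on natural scale, whose speed measure $m_X=m_Y*s$ has a bounded density with respect to Lebesgue in a neighborhood of $0$ (since both $s'$ and the density of $m_Y$ are bounded there). Hypothesis~\eqref{eq:matsumoto} of Theorem~\ref{thm:h-transfo-3} is then trivially satisfied, so $\PP_x(t_1<\widetilde T_0^X)\leq A_{t_1}x$, and undoing the change of variable gives the required estimate for $Y$.

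Finally, to close the argument via Proposition~\ref{prop:(D)-(D')}(i), I need $k_Y([0,1])<\infty$. Since $e^{-2\int_1^y h(z)\,dz}$ is bounded on $(0,1]$ by local integrability of $h$, one has $k_Y([0,1])\leq C\int_0^1\kappa(y)\,dy<\infty$, which is exactly our assumption. Hence (D) holds and Theorem~\ref{thm:QSD_full_with_killing} applies. The main obstacle is the asymptotic estimate at $+\infty$: the fast (doubly exponential) decay of $e^{2\int_1^y h}$ combined with the rapid growth of $e^{-2\int_1^y h}$ must be handled carefully, and it is precisely the exponent $\beta>1$ (rather than $\beta>0$ as in Proposition~\ref{prop:logistic}) that is needed to make $\int^\infty y^{-\beta}\,dy$ converge here, since the factor $1/y$ present in the logistic speed measure is absent.
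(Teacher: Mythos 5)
Your proof is correct in substance and follows exactly the route the paper intends: the paper's entire proof of Proposition~\ref{prop:brownien-drifte} is the remark that it ``can be proved similarly as Proposition~\ref{prop:logistic}'', and you carry out precisely that analogy --- the split of $\int_0^y e^{2\int_u^y h(z)\,dz}\,du$ over $(0,y_0)$ and $(y_0,y)$, the convexity bound with $C_\beta$ giving the $O(y^{-\beta})$ tail, Theorem~\ref{thm:h-transfo-3} applied to $X=s(Y)$ (the bounded density of $m_X$ near $0$ makes \eqref{eq:matsumoto} immediate), and the conclusion via Proposition~\ref{prop:(D)-(D')}(i) from $k_Y([0,1])\leq C\int_0^1\kappa(y)\,dy<\infty$. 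Your closing observation that $\beta>1$ (rather than $\beta>0$) is forced precisely because the factor $1/y$ in the logistic speed density is absent is exactly the point of the proposition.

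One caveat on signs, which you should fix even though it does not affect the mathematics downstream. Under the conventions you state (generator $\tfrac12 f''-hf'$, i.e.\ drift $-h$ as in \eqref{eq:EDS}, so $s'(y)=e^{2\int_1^y h}$ and $m_Y(dy)=2e^{-2\int_1^y h(z)\,dz}\,dy$), one computes $s(y)\,m_Y(dy)=2\bigl(\int_0^y e^{-2\int_u^y h(z)\,dz}\,du\bigr)\,dy$, \emph{not} $2\bigl(\int_0^y e^{+2\int_u^y h(z)\,dz}\,du\bigr)\,dy$; and with $h\leq -Cy^\beta$ the former diverges super-exponentially, the drift $-h\geq Cy^\beta$ pushes the process to $+\infty$, $s(\infty)<\infty$, and $\int_0^\infty s\,(dm_Y+dk_Y)=\infty$, so (D) would fail. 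Your displayed identity tacitly corresponds to the opposite convention (drift $+h$, $s'=e^{-2\int_1^y h}$, $m_Y'=2e^{2\int_1^y h}$), which is the only reading under which the statement is true. This mismatch actually originates in the paper itself: the hypothesis $h(y)\leq -Cy^\beta$ is carried over from Proposition~\ref{prop:logistic}, where the drift is $+Y_th(Y_t)$, whereas \eqref{eq:EDS} has drift $-h$; so you have propagated (and effectively auto-corrected) a sign typo in the statement rather than introduced a gap. A last minor slip: the density of $k_Y$ near $0$ is $2\kappa(y)e^{\pm 2\int_1^y h}$ and need not be bounded above, since $\kappa$ is only assumed integrable near $0$; but your actual argument only uses boundedness of the exponential factor there, so this costs nothing.
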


In the case where $\kappa$ is continuous bounded and $h$ is continuous on $[0,+\infty)$, existence of a quasi-stationary distribution
was already proved in~\cite{kolb-steinsaltz-12}. However, even in this case, uniqueness of the QSD, exponential convergence of
conditional distributions and exponential ergodicity of the $Q$-process are new. Conversely, other results
of~\cite{kolb-steinsaltz-12} cannot be easily studied with our methods, such as dichotomy results on the behavior of conditioned
diffusion processes with killing and the case of natural boundary at $\infty$.

\section{On continuous and discontinuous absorption times when $0$ is regular or exit and $\infty$ is entrance}
\label{sec:pties-diff}

We give in this section a result on the probability $\PP_x(\tau^c_\d<t\wedge\tau^d_\d)$ that the process hits $0$ continuously before
being killed, which is useful for the proof of our results of Subsection~\ref{thm:QSD_full_with_killing}. In the introduction, we called this
property \emph{coming down from infinity before killing} in reference to the classical property of coming down from infinity for
diffusion processes without killing (see~\cite{CCLMMS09,champagnat-villemonais-15b}). Since this result also has an interest by
itself, we give it in a separate section.

\begin{thm}
  \label{thm:pitman-yor}
  \begin{multline}
    \exists t>0\text{ such that }\lim_{x\rightarrow +\infty}\PP_x(\tau^c_\d<t\wedge\tau^d_\d)>0 \\
    \Longleftrightarrow \quad \int_0^\infty y\,(dk(y)+dm(y))<\infty.
    \label{eq:continuous-extinct-proba>0}
  \end{multline}
\end{thm}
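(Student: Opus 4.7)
The plan is to reformulate the event $\{\tau_\partial^c < t \wedge \tau_\partial^d\}$ as a condition on a squared Bessel process via the first Ray-Knight theorem, and then read off both directions. By the pathwise construction of Section~\ref{sec:construction}, $\tau_\partial^c = \tT_0 = \int_0^\infty L^y_{T^B_0}\,m(dy)$, and conditionally on $B$, $\tau_\partial^c<\tau_\partial^d$ if and only if $\int_0^\infty L^y_{T^B_0}\,k(dy) < \mathcal{E}$. Under $\PP_x$, the first Ray-Knight theorem identifies $(L^y_{T^B_0})_{0\leq y\leq x}$ with a squared Bessel process of dimension~$2$ started from $0$ at $y=0$, and continues $(L^y_{T^B_0})_{y\geq x}$ as a squared Bessel process of dimension~$0$ started from $L^x_{T^B_0}$. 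Driving a global dimension-$2$ squared Bessel process $(Z^y)_{y\geq 0}$ from $0$ (and its dimension-$0$ counterpart) by the same Brownian motion, one obtains the pathwise coupling $L^y_{T^B_0}=Z^y$ for $y\leq x$ and $L^y_{T^B_0}\leq Z^y$ for $y>x$, since a dimension-$0$ squared Bessel process stays below the dimension-$2$ one when driven by the same noise. In particular
\[
\{\tau_\partial^c<t\wedge\tau_\partial^d\}=\Bigl\{\int_0^\infty L^y_{T^B_0}\,m(dy)<t,\ \int_0^\infty L^y_{T^B_0}\,k(dy)<\mathcal{E}\Bigr\}.
\]

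For the direction $(\Leftarrow)$, suppose $\int_0^\infty y\,(m+k)(dy)<\infty$. Using $L^y_{T^B_0}\leq Z^y$, Fubini and $\E Z^y=2y$, both $\int_0^\infty Z^y\,m(dy)$ and $\int_0^\infty Z^y\,k(dy)$ are almost surely finite. Choose $t$ large enough that $\PP\bigl(\int Z^y\,m(dy)\geq t\bigr)<p_0/2$, where $p_0:=\E\exp\bigl(-\int Z^y\,k(dy)\bigr)>0$. The inclusion $\{\int Z^y m<t,\,\int Z^y k<\mathcal{E}\}\subset\{\int L^y_{T^B_0} m<t,\,\int L^y_{T^B_0} k<\mathcal{E}\}$ then yields $\PP_x(\tau_\partial^c<t\wedge\tau_\partial^d)\geq p_0/2$ uniformly in $x$.

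For the direction $(\Rightarrow)$, I argue by contraposition and first handle the case $\int_0^\infty y\,m(dy)=\infty$. The coupling gives $\int_0^\infty L^y_{T^B_0}\,m(dy)\geq\int_0^x Z^y\,m(dy)$, which increases to $\int_0^\infty Z^y\,m(dy)$ as $x\to\infty$, so it suffices to show $\int_0^\infty Z^y\,m(dy)=\infty$ almost surely. By additivity of squared Bessel processes, $Z^y=W_1(y)^2+W_2(y)^2$ for two independent standard Brownian motions $W_1,W_2$ started from $0$, and Fubini gives $\E\int W_1^2\,m=\int y\,m(dy)=\infty$, so $\{\int W_1^2\,m=\infty\}$ has positive probability. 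To upgrade this to probability one, apply the Brownian time inversion $\widehat W(s):=sW_1(1/s)$: the substitution $s=1/y$ turns $\int_1^\infty W_1(y)^2\,m(dy)$ into $\int_0^1 \widehat W(s)^2 s^{-2}\,\hat m(ds)$, where $\hat m$ is the pushforward of $m|_{[1,\infty)}$ by $y\mapsto 1/y$ and is locally finite on $(0,1]$. The divergence event therefore lies in the germ $\sigma$-algebra of $\widehat W$ at $0$, which is trivial by Blumenthal's $0$-$1$ law, forcing the probability to be~$1$. The case $\int y\,k(dy)=\infty$ is analogous, using $\mathcal{E}<\infty$ almost surely to get $\PP_x(\int L^y_{T^B_0}\,k(dy)<\mathcal{E})\to 0$.

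The main obstacle lies in the Ray-Knight coupling of the first step: one must correctly identify the local time profile on both $[0,x]$ and $[x,\infty)$ and justify the comparison $L^y_{T^B_0}\leq Z^y$ for $y\geq x$ via a pathwise comparison between the dimension-$0$ and dimension-$2$ squared Bessel SDEs driven by a common Brownian motion. With this coupling, both directions reduce to elementary computations for $Z$ together with a standard $0$-$1$ law for Brownian motion.
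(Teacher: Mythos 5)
Your overall architecture is the same as the paper's: the paper's Proposition~\ref{prop:exit-before-killing} is exactly your first step (rewrite $\{\tau^c_\d<t\wedge\tau^d_\d\}$ via $\int_0^\infty L^y_{T^B_0}\,m(dy)$ and $\int_0^\infty L^y_{T^B_0}\,k(dy)$, apply Ray-Knight, and compare with a squared Bessel process of dimension $2$ started from $0$ through a pathwise coupling of the square-root SDEs), and your $(\Leftarrow)$ direction is correct (finite mean $\EE Z^y=2y$ does give a.s.\ finiteness, and your uniform-in-$x$ lower bound works; you should just note that the limit in $x$ exists, e.g.\ by the monotonicity in $x$ that your own coupling provides and that the paper states explicitly).

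The genuine gap is in the $(\Rightarrow)$ direction: the inference ``$\EE\int W_1^2\,dm=\int y\,m(dy)=\infty$, so $\{\int W_1^2\,dm=\infty\}$ has positive probability'' is a non sequitur. A nonnegative random variable can be a.s.\ finite with infinite mean, so infinite expectation alone says nothing about the probability of divergence; and your Blumenthal/time-inversion step, which is correct, only shows that this probability is $0$ or $1$ --- it cannot decide which. Deciding which is precisely the nontrivial point, and it is where the paper invokes the Pitman--Yor dichotomy \cite[Prop.\,(2.2), Lemma\,(2.3)]{pitman-yor-82} (essentially Jeulin's lemma for squared Bessel processes): $\int_0^\infty R_t\,d\mu(t)=\infty$ a.s.\ if and only if $\int_0^\infty t\,d\mu(t)=\infty$. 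Your elementary framework can, however, be repaired without citing that result: for $S_x:=\int_1^x W_1(y)^2\,m(dy)$ one has $\EE\bigl[W_1(y)^2W_1(z)^2\bigr]=yz+2(y\wedge z)^2\leq 3yz$, hence $\EE S_x^2\leq 3(\EE S_x)^2$, and Paley--Zygmund gives $\PP\bigl(S_x\geq\tfrac12\EE S_x\bigr)\geq\tfrac1{12}$ uniformly in $x$; since $\EE S_x\to\infty$, this forces $\PP(S_\infty=\infty)\geq\tfrac1{12}>0$, and only then does your $0$-$1$ law legitimately upgrade this to probability one (the killing case $\int y\,k(dy)=\infty$ is handled identically, using $\mathcal{E}<\infty$ a.s.). As written, though, the divergence step is missing its key justification, so the proof is incomplete at exactly the point the paper outsources to Pitman--Yor.
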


This result is obtained as a consequence of the next proposition.

\begin{prop}
  \label{prop:exit-before-killing}
  For all $k$ and $m$ and all $t>0$,
  \begin{equation}
    \label{eq:exit-before-killing-1}
    \PP_x(\tau^c_\d<t\wedge\tau^d_\d)=\PP_x(\tau^c_\d<t)=\EE\left[\mathbbm{1}_{\int_0^\infty Z^x_s\,m(ds)<t}\exp\left(-\int_0^\infty
        Z^x_s\,k(ds)\right)\right],
  \end{equation}
  where
  $$
  Z^x_t=2\int_0^t\sqrt{Z^x_s}\,dW_s+2(t\wedge x),\quad\forall t\geq 0.
  $$
  In particular, $\PP_x(\tau^c_\d<t\wedge\tau^d_\d)$
  is non-increasing w.r.t.\ $x>0$ and
  \begin{equation}
    \label{eq:exit-before-killing-infinity}
    \lim_{x\rightarrow +\infty}\PP_x(\tau^c_\d<t\wedge\tau^d_\d)=\EE\left[\mathbbm{1}_{\int_0^\infty Z_s\,m(ds)<t}\exp\left(-\int_0^\infty
        Z_s\,k(ds)\right)\right],
  \end{equation}
  where $(Z_t,t\geq 0)$ is a squared Bessel process of dimension 2 started from 0.
\end{prop}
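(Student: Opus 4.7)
The approach is to combine the pathwise construction from Section~\ref{sec:construction} with the Ray--Knight description of Brownian local time. The first equality is essentially trivial: by~\eqref{eq:def-absorption-times}, on $\{\tau^c_\d<+\infty\}$ we have $\tau^d_\d=+\infty$, so the events $\{\tau^c_\d<t\wedge\tau^d_\d\}$ and $\{\tau^c_\d<t\}$ coincide. Next, using~\eqref{eq:lien-tau-d-brownien} and~\eqref{eq:kappa_t}, one has $\{\tau^c_\d<t\}=\{A_{T^B_0}<t\}\cap\{\kappa_{\tT_0}\leq\cE\}$ with $\kappa_{\tT_0}=\int_0^\infty L^y_{T^B_0}\,k(dy)$ (using continuity of $\kappa$ in $t$). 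Conditioning on $B$ and integrating out the independent unit exponential $\cE$ gives
\begin{equation*}
\PP_x(\tau^c_\d<t)=\EE_x\left[\mathbbm{1}_{A_{T^B_0}<t}\,\exp\left(-\int_0^\infty L^y_{T^B_0}\,k(dy)\right)\right],
\end{equation*}
where $\EE_x$ refers to a Brownian motion $B$ started at $x$.

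The central step is to invoke the Ray--Knight theorem to identify the law of $(L^y_{T^B_0})_{y\geq 0}$ under $\PP_x$. In the version relevant here (see e.g.~\cite[Ch.\,XI]{Revuz1999}), $(L^y_{T^B_0})_{0\leq y\leq x}$ is a squared Bessel process of dimension 2 starting from 0 at $y=0$, while $(L^{x+u}_{T^B_0})_{u\geq 0}$ is a squared Bessel process of dimension 0 starting at $L^x_{T^B_0}$. This matches exactly the SDE description of $(Z^x_y)_{y\geq 0}$ given in the statement, so substituting in the previous display yields~\eqref{eq:exit-before-killing-1}. The clean invocation of Ray--Knight, requiring this concatenation of a $BESQ_2$ piece and a $BESQ_0$ piece at the level $y=x$, is the main conceptual ingredient; the rest is essentially bookkeeping.

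For the monotonicity and the limit as $x\to+\infty$, I would realize all the processes $Z^x$ pathwise from a single Brownian motion $W$, keeping the SDE form $dZ^x_t=2\sqrt{Z^x_t}\,dW_t+2\,\mathbbm{1}_{t<x}\,dt$ with $Z^x_0=0$. For $x<x'$ the diffusion coefficients agree while the drift of $Z^{x'}$ dominates that of $Z^x$ on $[x,x']$, so a comparison theorem for one-dimensional SDEs with H\"older-$\tfrac12$ diffusion coefficient (e.g.\ Yamada--Watanabe) yields $Z^x_s\leq Z^{x'}_s$ a.s.\ for all $s\geq 0$. The integrand in~\eqref{eq:exit-before-killing-1} is then non-increasing in $x$, giving the stated monotonicity. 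Letting $x\to+\infty$, the same coupling makes $Z^x$ increase pointwise to the $BESQ_2$ process $Z$ started at 0; monotone convergence applied to the integrals $\int Z^x\,m$ and $\int Z^x\,k$, combined with dominated convergence on the bounded integrand, then gives~\eqref{eq:exit-before-killing-infinity}.
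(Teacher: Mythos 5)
Your proposal is correct and follows essentially the same route as the paper: the first equality from the definition of the absorption times, rewriting $\PP_x(\tau^c_\d<t)$ via $\int_0^\infty L^y_{T^B_0}\,m(dy)$ and $\int_0^\infty L^y_{T^B_0}\,k(dy)$ and integrating out the independent exponential, then the Ray--Knight identification of $(L^y_{T^B_0})_{y\geq 0}$ with $Z^x$, and finally a comparison-theorem coupling in $x$ to get monotonicity and the limit. The only (immaterial) difference is that you apply the comparison theorem directly to $Z^x$ with its H\"older-$\tfrac12$ diffusion coefficient, whereas the paper compares the square-root processes $Y^x=\sqrt{Z^x}$, which solve Bessel-type SDEs with common additive noise.
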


\begin{proof}
  We have
  \begin{align*}
    \PP_x(\tau^c_\d<t\wedge\tau^d_\d) & =\PP_x(\tT_0<t\text{ and }\kappa_{\tT_0}<\mathcal{E})\\
    &= \PP_x\left(\int_0^\infty L^y_{\sigma_{\tT_0}}\,m(dy)<t\text{ and }\int_0^\infty L^y_{\sigma_{\tT_0}}\,k(dy)<\mathcal{E}\right)\\ 
    & =\PP_x\left(\int_0^\infty L^y_{T^B_0}\,m(dy)<t\text{ and }\int_0^\infty L^y_{T^B_0}\,k(dy)<\mathcal{E}\right) \\ 
    & =\EE_x\left[\mathbbm{1}_{\int_0^\infty L^y_{T^B_0}\,m(dy)<t}\exp\left(-\int_0^\infty L^y_{T^B_0}\,k(dy)\right)\right].
  \end{align*}
  Hence~\eqref{eq:exit-before-killing-1} follows from Ray-Knight's theorem (cf.\ e.g.~\cite{rogers-williams-1-00}).

  Since the process $Y^x_t=\sqrt{Z^x_t}$ is solution to
  $$
  dY^x_t=dW_t+\frac{2\mathbbm{1}_{t\leq x}-1}{2Y^x_t}\,dt,
  $$
  standard comparison arguments show that the processes $(Y^x)_{x\geq 0}$ constructed with the same Brownian motion satisfy
  $Y^x_t\leq Y^{x'}_t$ a.s.\ for all $t\geq 0$ and $x\leq x'$, and $Y_t:=\lim_{x\rightarrow+\infty} Y^x_t$ is a Bessel process of
  dimension 2. Eq.~\eqref{eq:exit-before-killing-infinity} then follows from Lebesgue's theorem.
\end{proof}

\begin{proof}[Proof of Theorem~\ref{thm:pitman-yor}]
  In view of~\eqref{eq:exit-before-killing-infinity}, the equivalence~\eqref{eq:continuous-extinct-proba>0} follows
  from~\cite[Prop.\,(2.2),\,Lemma\,(2.3)]{pitman-yor-82}, which states that, for any square Bessel process $(R_t,t\geq 0)$ of
  positive dimension started from 0 and any positive Radon measure $\mu$ on $(0,+\infty)$,
  $$
  \PP\left(\int_{0}^\infty R_t\,d\mu(t)<\infty\right)=1 \quad\Longleftrightarrow\quad \int_0^\infty t\,d\mu(t)<\infty
  $$
  and
  $$
  \PP\left(\int_{0}^\infty R_t\,d\mu(t)=\infty\right)=1 \quad\Longleftrightarrow\quad \int_0^\infty t\,d\mu(t)=\infty.
  $$
  Hence both r.v.\ $\int_0^\infty L^y_{T^B_0}\,m(dy)$ and $\int_0^\infty L^y_{T^B_0}\,k(dy)$ are a.s.\ finite iff $\int_0^\infty
  y\,(dk(y)+dm(y))<\infty$, which entails~\eqref{eq:continuous-extinct-proba>0}.
\end{proof}

\section{Proof of the results of Section~\ref{sec:QSD-diff-kill}}
\label{sec:proof-results-kill}

Theorems~\ref{thm:QSD_full_with_killing} and~\ref{thm:Q-proc-with-killing} and the main part of
Proposition~\ref{prop:eta-with-killing} directly follow from the results on general Markov processes
of~\cite{champagnat-villemonais-15}. More precisely, the following condition (A) is equivalent to~\eqref{eq:expo-cv-thm-with-killing}
(\cite[Thm.\,2.1]{champagnat-villemonais-15}), and implies properties~\eqref{eq:convergence-to-eta-with-killing}
and~\eqref{eq:eigen-function-with-killing} of Proposition~\ref{prop:eta-with-killing} (\cite[Prop.\,2.3]{champagnat-villemonais-15})
and the whole Theorem~\ref{thm:Q-proc-with-killing} (\cite[Thm.\,3.1]{champagnat-villemonais-15}).

\paragraph{Assumption~(A)}
There exists a probability measure $\nu$ on $(0,+\infty)$ such that
\begin{itemize}
\item[(A1)] there exists $t_0,c_1>0$ such that for all $x>0$,
  $$
  \PP_x(X_{t_0}\in\cdot\mid t_0<\tau_\partial)\geq c_1\nu(\cdot);
  $$
\item[(A2)] there exists $c_2>0$ such that for all $x>0$ and $t\geq 0$,
  $$
  \PP_\nu(t<\tau_\partial)\geq c_2\PP_x(t<\tau_\partial).
  $$
\end{itemize}

Hence, we need first to prove that (C) implies (A) (in Subsection~\ref{sec:(C)->(A)}), second, to prove that (C') implies (A) (in
Subsection~\ref{sec:(C')->(A)}), and finally, to prove that $\eta(x)\leq Cx$ for all $x\geq 0$ (in
Subsection~\ref{sec:proof-eta-with-killing}), which is the only part of Proposition~\ref{prop:eta-with-killing} left to prove.

\subsection{Proof that Condition~(C) implies (A)}
\label{sec:(C)->(A)}

Note that~\cite[Thm.\,2.1]{champagnat-villemonais-15} also assumes that
\begin{align}
  \label{eq:step0}
  \P_x(t<\tau_\d)>0,\quad \forall x>0,\quad \forall t>0,
\end{align}
which is entailed by Prop.~\ref{prop:regularity}.

Our proof of~(A) follows four steps, similarly as the proof of~\cite[Thm.\,3.1]{champagnat-villemonais-15b}. In the first step, we
prove that when $X_0$ is close to $0$, then, conditionally on non-absorption, the process exits some neighborhood of $0$ with
positive probability in bounded time. In the second step, we construct the measure $\nu$ involved in~(A). We next prove~(A1) in the
third step, and~(A2) in the last step.

\medskip\noindent  \textit{Step 1: the conditioned process escapes a neighborhood of 0 in finite time.}\\
The goal of this step is to prove that there exists $\varepsilon,c>0$ such that
\begin{equation}
  \label{eq:step1_killing}
  \P_x(X_{t_1}\geq\varepsilon\mid t_1<\tau_\d)\geq c,\quad\forall x>0, 
\end{equation}
where $t_1$ is taken from Assumption~(C).

To prove this, recall that $\widetilde{X}$ is a diffusion process on natural scale with speed measure $m$ but with null killing
measure. We first observe that, since $\widetilde{X}$ is a local martingale, for all $x\in(0,1)$,
\begin{align*}
  x&=\E_x(\widetilde{X}_{t_1\wedge \widetilde{T}_1\wedge \tau_\kappa}),
\end{align*}
where we recall that $\tau_\kappa=\tau^d_\d$ on the event $\tau^d_\d<\infty$ and $\widetilde{T}_0=\tau_\d^c$ on the event
$\tau^c_\d<\infty$. But the absorption at $0$ ensures that
\begin{align*}
  \widetilde{X}_{t_1\wedge \widetilde{T}_1\wedge \tau_\kappa}=\widetilde{X}_{t_1\wedge \widetilde{T}_1\wedge
    \tau_\kappa}\11_{t_1\wedge \tau_\kappa< \widetilde{T}_0}+\11_{\tT_1< \widetilde{T}_0 < t_1\wedge \tau_\kappa}.
\end{align*}
We thus have for all $x\in (0,1)$
\begin{align*}
  x=\P_x(t_1\wedge \tau_\kappa<\widetilde{T}_0)\E_x(\widetilde{X}_{t_1\wedge \widetilde{T}_1\wedge \tau_\kappa}\mid t_1\wedge
  \tau_\kappa< \widetilde{T}_0) +\P_x(\tT_1< \widetilde{T}_0 < t_1\wedge \tau_\kappa).
\end{align*}
Now, the strong Markov property entails
\begin{align*}
  \P_x(\tT_1<\tT_0<t_1\wedge\tau_\kappa)\leq \P_x(\tT_1<\tT_0)\P_1(\tT_0<t_1\wedge\tau_\kappa).
\end{align*}
Since $\P_1(\tT_0<t_1\wedge\tau_\kappa)<1$ (see Prop.~\ref{prop:exit-before-killing}) and $\P_x(\tT_1\leq \tT_0)=x$ (recall that
$\tX$ is a local martingale), we deduce from the two previous equations that there exists a constant $A'>0$ such that
\begin{align*}
  A'x\leq \P_x(t_1\wedge \tau_\kappa<\widetilde{T}_0)\E_x(\widetilde{X}_{t_1\wedge \widetilde{T}_1\wedge \tau_\kappa}\mid t_1\wedge
  \tau_\kappa< \widetilde{T}_0).
\end{align*}
By Assumption~(C), we have $\P_x(t_1<\tT_0)\leq Ax$ and $\P_x(\tau_\d^d<\tau_\d^c)\leq Ax$. But, by definition of $\tau_\d^c$ and
$\tau_\d^d$, we have $\{\tau_\d^d<\tau_\d^c\}=\{\tau_\kappa<\tT_0\}$, so that
\begin{align*}
  \PP_x(t_1\wedge\tau_\kappa<\tT_0)\leq 2Ax.
\end{align*}
As a consequence,
\begin{align*}
  \E_x\left(1-\tX_{t_1\wedge \tT_1\wedge \tau_\kappa}\ \left|\ t_1\wedge \tau_\kappa<\tT_0\right.\right)\leq 1-\frac{A'}{2A}.
\end{align*}
Since we can assume without loss of generality that $\frac{A'}{2A}<1$, Markov's inequality implies that, setting
$b=1-\sqrt{1-A'/2A}$,
\begin{equation*}
  \P_x\left(\widetilde{X}_{t_1\wedge \tT_1\wedge \tau_\kappa}\leq b\ \left|\ t_1\wedge\tau_\kappa<\tT_0\right.\right)\leq \frac{1-A'/2A}{1-b}=1-b,
\end{equation*}
hence
\begin{align*}
  \P_x\left(\widetilde{X}_{t_1\wedge \tT_1\wedge \tau_\kappa}> b\ \left|\ t_1\wedge\tau_\kappa<\tT_0\right.\right)\geq b.
\end{align*}
We deduce that, for all $x\in(0,b)$,
\begin{align}
  \label{eq:calcul_killing}
  \P_x(\tT_b<t_1\wedge \tau_\kappa\mid t_1\wedge \tau_\kappa<\tT_0)=\P_x(\tT_b<t_1\wedge \tT_1\wedge \tau_\kappa\mid
  t_1\wedge \tau_\kappa<\tT_0)\geq b.
\end{align}
Now, since $\P_b(t_1<\tT_0\wedge \tau_\kappa)>0$ (see Prop.~\ref{prop:regularity}), there exists $\varepsilon\in(0,b)$ such that
\begin{equation}
  \label{eq:def-epsilon_killing}
  \P_{b}(t_1< \tT_\varepsilon\wedge \tau_\kappa)>0.    
\end{equation}
Hence, we deduce from the strong Markov property that
\begin{align*}
  \P_x(X_{t_1}\geq\varepsilon) &=\P_x(\tX_{t_1}\geq \varepsilon\text{ and }t_1<\tau_\kappa)\\
  & \geq\P_x(\tT_{b}<t_1\wedge \tau_\kappa)\P_{b}(t_1<\tT_\varepsilon\wedge\tau_\kappa) \\
  & \geq b\,\P_x(t_1\wedge\tau_\kappa<\tT_0)\,\P_{b}(t_1<\tT_\varepsilon\wedge \tau_\kappa),
\end{align*}
where the last inequality follows from~\eqref{eq:calcul_killing}. But $\P_x(t_1\wedge\tau_\kappa<\tT_0)\geq
\P_x(t_1<\tT_0\wedge\tau_\kappa)=\P_x(t_1<\tau_\d)$, so that
\begin{align*}
  \P_x(X_{t_1}\geq\varepsilon\mid t_1<\tau_\d)\geq b\,\P_{b}(t_1<\tT_\varepsilon\wedge \tau_\kappa)>0.
\end{align*}
This entails~\eqref{eq:step1_killing} for $x<b$.

For $x\geq b$, the continuity (before killing) and the strong Markov property for $X$ imply
\begin{align*}
  \P_x(X_{t_1}>\varepsilon\mid t_1<\tau_\d)&\geq\P_x(X_{t_1}>\varepsilon)\\
  &\geq\P_x(T_b<\infty)\P_{b}(t_1<\tT_\varepsilon\wedge\tau_\kappa)\\
  &\geq \P_x(\tau_\d^c<\infty)\P_{b}(t_1<\tT_\varepsilon\wedge\tau_\kappa)
\end{align*}
Using Proposition~\ref{prop:exit-before-killing} and~\eqref{eq:def-epsilon_killing}, the proof of~\eqref{eq:step1_killing} is completed.

\medskip \noindent \textit{Step 2: Construction of coupling measures for the unconditioned process.}\\
We prove that there exist two constants $t_2,c'>0$ such that, for all $x\geq\varepsilon$,
\begin{equation}
  \label{eq:step2_killing}
  \P_x(X_{t_2}\in \cdot)\geq c'\nu,
\end{equation}
where 
$$
\nu=\P_\varepsilon(X_{t_2}\in\cdot\mid t_2<\tau_\d).
$$

Fix $x\geq\varepsilon$ and construct two independent diffusions $X^\varepsilon$ and $X^x$ with speed measure $m(dx)$, killing measure
$k(dx)$, and initial value $\varepsilon$ and $x$ respectively. Let $\theta=\inf\{t\geq 0:X^\varepsilon_t=X^x_t\}$. By the strong
Markov property, the process
$$
Y^x_t=
\begin{cases}
  X^x_t & \text{if }t\leq\theta, \\
  X^\varepsilon_t & \text{if }t>\theta
\end{cases}
$$
has the same law as $X^x$. By the continuity of the paths of the diffusions before killing, $\theta\wedge
\tau_\d^{d,x}\leq\tau^x_\d:=\inf\{t\geq 0: X^x_t=0\}$. Hence, for all $t>0$,
$$
\P(\theta<t)\geq\P(\tau^{c,x}_\d<t\text{ and }\tau_\d^{c,x}<\tau^{d,x}_\d).
$$
By Theorem~\ref{thm:pitman-yor}, there exists $t_2>0$ and $c''>0$ such that
\begin{align*}
  \inf_{y>0}\P_y(\tau^{c}_\d<t_2\text{ and }\tau_\d^{c}<\tau^{d}_\d)\geq c''>0.
\end{align*}
Hence
\begin{align*}
  \P_x(X_{t_2}\in\cdot)&=\P(Y^x_{t_2}\in\cdot)\geq\P(X^\varepsilon_{t_2}\in\cdot,\ \tau^{c,x}_\d<t_2\text{ and }\tau_\d^{c,x}<\tau^{d,x}_\d)\\
  &\geq c''\P_\varepsilon(X_{t_2}\in \cdot),
\end{align*}
where the last inequality follows from the independence of $X^x$ and $X^\varepsilon$. Therefore, \eqref{eq:step2_killing} is proved
with $c'=c''\P_\varepsilon(t_2<\tau_\d)$.

\medskip \noindent \textit{Step 3: Proof of (A1).}\\
Using successively the Markov property, Step 2 and Step 1, we have for all $x>0$
\begin{align*}
  \P_x(X_{t_1+t_2}\in\cdot\mid t_1+t_2<\tau_\d) & \geq \P_x(X_{t_1+t_2}\in\cdot\mid t_1<\tau_\d) \\ 
  & \geq \int_\varepsilon^\infty \P_y(X_{t_2}\in\cdot)\P_x(X_{t_1}\in dy\mid t_1<\tau_\d) \\
  & \geq c'\nu(\cdot)\P_x(X_{t_1}\geq\varepsilon\mid t_1<\tau_\d)\\
  & \geq c\,c' \,\nu(\cdot).
\end{align*}
This entails (A1) with $t_0=t_1+t_2$ and $c_1=cc'$.

\medskip \noindent \textit{Step 4: Proof of (A2).}\\ 
Let $a>0$ be such that $\nu([a,+\infty))>0$. Then we have, for all $x\geq a$,
\begin{align*}
  \P_x(X_{t_0}\geq a)&\geq c_1\nu([a,+\infty))\P_x(\tau_\d> t_0)\\
  &\geq c_1\nu([a,+\infty)) \P_x(T_a<\infty)\P_a(\tau_\d>t_0)\\
  &\geq c_1\nu([a,+\infty)) \P_x(\tau_\d^c < \tau_\d^d)\P_a(\tau_\d> t_0),
\end{align*}
where we have used the strong Markov property for the second inequality. By Theorem~\ref{thm:pitman-yor}, $\inf_{x\in(0,+\infty)}
\P_x(\tau_\d^c < \tau_\d^d)>0$ and we know that $\P_a(\tau_\d> t_0)>0$ by Proposition~\ref{prop:regularity}. As a consequence,
\begin{align*}
  \inf_{x\geq a} \P_x(X_{t_0}\geq a) >0.
\end{align*}
Using this inequality and a standard renewal argument, we deduce that there exists $\rho>0$ such that, for all $k\in\N$,
\begin{align}
  \label{eq:lemme1_killing}
  \P_a(X_{kt_0}\geq a)\geq e^{-\rho k t_0}.
\end{align}
Now, we know from Step~4 of the proof of \cite[Theorem~3.1]{champagnat-villemonais-15b} (see also~\cite{CCLMMS09}) that, since the
diffusion $\tX$ on $[0,\infty)$ without killing has $\infty$ as an entrance boundary, for all $r>0$, there exists $y_r>0$ (which can
be assumed larger than $a$ without loss of generality) such that
\begin{equation*}
  \sup_{x\geq y_r}\E_x(e^{r \tT_{y_r}})<+\infty.
\end{equation*}
This implies
\begin{equation}
  \label{eq:ineg0_killing}  
  \sup_{x\geq y_\rho}\E_x(e^{\rho (T_{y_\rho}\wedge \tau_\d)})<+\infty,
\end{equation}
for the constant $\rho$ of~\eqref{eq:lemme1_killing}. Using the strong Markov property, for all $x\in[a,+\infty)$ and
$y\in[a,y_\rho]$,
\begin{align*}
  \P_x(t<\tau_\d)&\geq \P_x(T_a<\infty)\P_a(T_y<\infty)\P_y(t<\tau_\d)\\
  &\geq \P_x(\tau_\d^c<\tau_\d^d)\P_a(T_{y_\rho}<\infty)\P_y(t<\tau_\d).
\end{align*}
Using Theorem~\ref{thm:pitman-yor} and Proposition~\ref{prop:regularity}, we infer that there exists $C>0$ such that, for all $t\geq
0$,
\begin{align}
  \label{eq:ineg1_killing}
  \sup_{x\in[a,y_\rho]} \P_x(t<\tau_\d) \leq C\inf_{x\in [a,+\infty)}\P_x(t<\tau_\d).
\end{align}
Finally, we also deduce from the Markov property that, for all $s<t$,
\begin{align}
  \label{eq:ineq2_killing}
  \P_a(X_{\lceil s/t_0 \rceil t_0}\geq a)\inf_{x\in[a,+\infty)}\P_x(t-s<\tau_\d)\leq \P_a(t<\tau_\d).
\end{align}

Now, for all $x\geq y_\rho$, with a constant $C>0$ that may change from line to line, using
successively~\eqref{eq:ineg0_killing},~\eqref{eq:ineg1_killing}, \eqref{eq:ineq2_killing} and~\eqref{eq:lemme1_killing}, we obtain
\begin{align*}
  \P_x(t<\tau_\d)&\leq \P_x(t< T_{y_\rho}\wedge\tau_\d)+\int_{0}^t \P_{y_\rho}(t-s<\tau_\d)\,\P_x(T_{y_\rho}\in ds)\\
  &\leq Ce^{-\rho t}+C\int_0^t \P_a(t-s<\tau_\d)\,\P_x(T_{y_\rho}\in ds)\\
  &\leq Ce^{-\rho \lceil t/t_0 \rceil t_0}+C \P_a(t<\tau_\d)\int_0^t \frac{1}{\P_a(X_{\lceil s/t_0\rceil t_0}\geq a)}\,\P_x(T_{y_\rho}\in ds)\\
  &\leq C\P_a(t<\tau_\d)+C \P_a(t<\tau_\d)\int_0^t e^{\rho s}\,\P_x(T_{y_\rho}\in ds).
\end{align*}
We deduce that, for all $t\geq 0$,
\begin{align}
  \label{eq:findufin}
  \sup_{x\in [y_{\rho},+\infty)}\P_x(t<\tau_\d)\leq C \inf_{x\in[a,+\infty)}\P_x(t<\tau_\d).
\end{align}
Now, for all $x\in[a,+\infty)$ and all $y\in(0,a)$,
\begin{align*}
  \P_x(t<\tau_\d)&\geq \P_x(T_y<\infty)\P_y(t<\tau_\d)\\
  &\geq \P_x(\tau_\d^c<\tau_\d^d)\P_y(t<\tau_\d).
\end{align*}
Using Proposition~\ref{prop:exit-before-killing}, we deduce that there exists $C>0$ such that, $\forall t\geq 0$,
\begin{align}
  \label{eq:findufin2}
  \sup_{x \in(0,a)}\P_x(t<\tau_\d)\leq C \inf_{x\in[a,+\infty)}\P_x(t<\tau_\d).
\end{align}
Finally, since $\nu([a,+\infty))>0$, \eqref{eq:ineg1_killing}, \eqref{eq:findufin} and~\eqref{eq:findufin2} entail (A2).

\subsection{Proof that Condition~(C') implies (A)}
\label{sec:(C')->(A)}

The proof follows exactly the same steps as in Subsection~\ref{sec:(C)->(A)}. The only step which needs to be modified is Step~1, so
we only detail this step.

\medskip\noindent  \textit{Step 1: the conditioned process escapes a neighborhood of 0 in finite time.}\\
The goal of this step is to prove that there exists $\varepsilon',c>0$ such that
\begin{equation}
  \label{eq:step1_killing-bis}
  \P_x(X_{t_1}\geq\varepsilon'\mid t_1<\tau_\d)\geq c,\quad\forall x>0.    
\end{equation}
We know from the study of diffusions without killing of~\cite[Step\,1\,of\ Section\,5.1]{champagnat-villemonais-15b} that there
exists $c,b>0$ such that, for all $x\leq b$,
\begin{equation}
  \label{eq:last-proof-1}
  \PP_x(\tT_{b}<t_1\wedge\tT_0)\geq c\,\PP_x(t_1<\tT_0),    
\end{equation}
By Assumption~(C') and using~\eqref{eq:As}, a.s.\ for all $t\leq \tT_\varepsilon$,
$$
\kappa_t=\int_0^\infty L^y_{\sigma_t}\,dk(y)\leq A \int_0^\infty L^y_{\sigma_t}\,dm(y)=A\, A_{\sigma_t}=At.
$$
Note that we can assume without loss of generality in~\eqref{eq:last-proof-1} that $b\leq\varepsilon$. Therefore, for all
$x\in(0,b)$, a.s.\ under $\PP_x$,
$$
\{\tau_\kappa>t_1\wedge\tT_b\}=\{\kappa_{t_1\wedge\tT_b}<\cE\}\supset\{t_1\wedge\tT_b<\cE/A\}\supset\{t_1<\cE/A\}.
$$
Hence, using~\eqref{eq:last-proof-1} and the independence between $\cE$ and $\tX$,
\begin{align*}
  \PP_x(T_b<t_1\wedge\tau_\d) & =\PP_x(\{\tT_b<t_1\wedge\tT_0\}\cap\{\tau_\kappa>t_1\wedge\tT_b\}) \\
  & \geq \PP_x(\{\tT_b<t_1\wedge\tT_0\}\cap\{t_1<\cE/A\}) \\
  & \geq c\PP_x(t_1<\tT_0)e^{-At_1}\geq  ce^{-At_1}\PP_x(t_1<\tau_\d).
\end{align*}
But, for all $\varepsilon'\in(0,b),x\in(0,b)$,
$$
\PP_x(X_{t_1}\geq\varepsilon')\geq\PP_x(T_b<t_1\wedge\tau_\d)\PP_b(t_1<T_{\varepsilon'}\wedge\tau_\kappa).
$$
By Proposition~\ref{prop:regularity}, the last factor of the r.h.s.\ is positive for $\varepsilon'>0$ small enough, which concludes
the proof of~\eqref{eq:step1_killing-bis}, since the case $x\geq b$ can be handled exactly as in Subsection~\ref{sec:(C)->(A)}.

\subsection{Proof that $\eta(x)\leq Cx$ for all $x\geq 0$}
\label{sec:proof-eta-with-killing}

We recall that, since we already proved that (C) and (C') imply (A),~\cite[Prop.\,2.3]{champagnat-villemonais-15} entails the main
part of Proposition~\ref{prop:eta-with-killing}, and we only have to check that there exists a constant $C$ such that $\eta(x)\leq C
x$ for all $x\geq 0$, where $\eta(x)$ is the uniform limit of $e^{\lambda_0 t}\PP_x(t<\tau_\d)$. Since for all $t\geq t_1$, where the
constant $t_1$ is the one given in Assumption~(C) or~(C'),
\begin{equation}
  \label{eq:last-prop}
  e^{\lambda_0 t}\PP_x(t<\tau_\d)=e^{\lambda_0 t}\EE_x\left[\PP_{X_{t_1}}(t-t_1<\tau_\d)\right]\leq Ce^{\lambda_0 t_1}\PP_x(t_1<\tau_\d).
\end{equation}
For the last inequality, we used the fact that $e^{\lambda_0 s}\PP_y(s<\tau_\d)$ is uniformly bounded in $y>0$ and $s\geq 0$ because of
the uniform convergence in~\eqref{eq:convergence-to-eta-with-killing}.

By Assumption~(C) or (C'), $\PP_x(t_1<\tau_\d)\leq \PP_x(t_1<\tT_0)\leq Ax$, and we obtain the inequality $\eta(x)\leq Cx$ by letting
$t\rightarrow+\infty$ in~\eqref{eq:last-prop}.

\def\cprime{$'$} \def\cprime{$'$} \def\cprime{$'$} \def\cprime{$'$}
  \def\polhk#1{\setbox0=\hbox{#1}{\ooalign{\hidewidth
  \lower1.5ex\hbox{`}\hidewidth\crcr\unhbox0}}} \def\cprime{$'$}
  \def\lfhook#1{\setbox0=\hbox{#1}{\ooalign{\hidewidth
  \lower1.5ex\hbox{'}\hidewidth\crcr\unhbox0}}} \def\cprime{$'$}
  \def\cprime{$'$}

\end{document}